\theoremstyle{plain}
\newtheoremstyle{miestilo}{12pt}{\topsep}{\itshape}{}{\bf}{}{ }{}
\theoremstyle{miestilo}
\newtheorem{teorema}[subsection]{Theorem.}
\newtheorem{proposicion}[subsection]{Proposition.}
\newtheorem{lema}[subsection]{Lemma.}
\newtheoremstyle{misnotas}{12pt}{12pt}{}{}{\bf}{}{ }{\remark}
\theoremstyle{misnotas}
\newtheorem{apartado}[subsection]{\ {\ }}
\begin{document}
\flushbottom

\def \C{\mathcal C}
\def \E{\mathcal E}
\def \Hom{{\rm Hom}}
\def \End{{\rm End}}
\def \boPHI{\mathbf{\Phi}}

%
%

{\Large  {\centerline {\bf On von Neumann regularity}}}
{\Large   {\centerline  {\bf of algebras of quotients of Jordan algebras} } }%

 \medskip

\centerline{ Fernando Montaner \footnote{Partially supported  by
the Spanish Ministerio de Ciencia y Tecnolog\'{\i}a and FEDER (MTM2010-18370-C04-02), and by the
Diputaci\'on General de Arag\'on (Grupo de Investigaci\'on de
\'Algebra).}} \centerline{{\sl Departamento de Matem\'{a}ticas,
Universidad de Zaragoza}} \centerline{{\sl 50009 Zaragoza,
Spain}} \centerline{e-mail: fmontane@unizar.es}

 \medskip

%
%
\begin{abstract}
We address a Jordan version of Johnson theorem on (associative) algebras of quotients, namely whether a strongly nonsingular (the Jordan version of nonsingularity) has a von Neumann regular algebra of quotients.

Although the answer is negative in general, we prove that the result holds in the important case of algebras satisfying a polynomial identity.

\end{abstract}

\section*{Introduction} Algebras of quotients of Jordan algebras have received quite some attention in recent times (see \cite{densos, localorders} and references therein for a succinct survey of some of the work devoted to them). As in the case of associative algebras, from which that study draws much of its themes an techniques, it all began with the search of classical algebras of fractions (and more precisely, analogues of Goldie's theorems obtained by Zelmanov, see \cite{z-goldie-1, z-goldie-2}) thus answering a question derived from the fundamental problem posed by Jacobson in \cite[p. 426]{jac-ams} on Ore-like conditions on Jordan algebras ensuring the construction of algebras of fractions (which, in its original version, was answered by Mart\'{\i}nez in \cite{martinez}). A first wide generalization of those constructions, as was the case for algebras of quotients of associative algebras, was considering algebras of quotients of nonsingular Jordan algebras, for an adequate notion of nonsingularity in the Jordan context (a notion that was termed strong nonsingularity,  see \cite{esenciales} and definition below). That construction, which is a Jordan analogue of Johnson's algebra of quotients, made use of the filter of essential inner ideals, adapting to Jordan algebras one of the associative constructions of Jonhson's algebra of quotients, namely the one that does not rely on the injective hull of the algebra, which due to the non linearity of the notion inner ideal, is a concept not available for Jordan algebras (see however the comment at the beginning of 4.7 of \cite{pi-ii}).

As in the associative setting, Johnson's algebras of quotients can be viewed in the framework of general algebras of quotients as defined in \cite{densos}, since they are those for which essential inner ideals are dense (see \cite[5.19] {densos}).

A  fundamental result on associative algebras of quotients is Johnson's theorem (\cite[13.36]{Lam}, \cite[Theorem 26.11]{passman}, \cite[Theorem 26.11]{bmami}), which asserts that the maximal (or Utumi-Lambek) algebras of quotients of an associative algebra is von Neumann regular if and only if the algebra is nonsingular, so that its maximal algebra of quotients coincides with its Johnson algebra of quotients.

 Our purpose in the present paper is to examine whether a Jordan analogue of that result holds since as the title of the paper reads,  we are interested in conditions under which algebras of quotients of Jordan algebras are von Neumann regular. Certainly strong nonsingularity as defined in \cite{esenciales} which is the natural working  generalization of nonsingularity in the Jordan context, is not the only possible such condition (and as we will se later is not always even a condition for that). We can mention other conditions, as being Goldie \cite{fgm}, or being local Lesieur-Croisot \cite[Theorem 5.5]{localorders}, but all those conditions imply strong nonsingularity.

 The paper is organized as follows: After a first section of preliminaries which contains basic general notation and definitions, including a comment
 on polynomial identities on quadratic Jordan algebras to be generalized in section 4 to basic notions on polynomial identities on quadratic algebras, following the exposition provided in \cite[3.1.5]{jac-struc}, we recall the notion of general quotients, as defined in \cite{densos} as well the construction of maximal algebras of quotients.

 We then move to the answering of the question posed before, namely whether a strongly nonsigular Jordan algebra is von Neumann regular. We give on section 3 a counterexample for the general assertion.

 In the next section we start the study of that question in the particular but quite central case in Jordan theory where the algebra is nondegenerate. PI.

 Due to the form of the construction of the maximal algebra of quotients in that situation, which relies strongly on the module structure over its centroid, whose central role in the study of PI algebras is due to \cite[3.6]{fgm} for Jordan algebras of the Posner-Rowen theorem (see \cite[6.1.28]{anillos}, \cite[1.7.9]{rowen-pi}), which asserts that any nonzero inner ideal of a nondegenerate Jordan algebra hits non trivially the weak center (\cite{fulgham}).

 We devote the next section to exposing some facts on the centroid of the algebras of quotients of nondegenerate PI-algebras, generalizing to that situation the results of \cite{pi-covers} (see also [5.19.(4)] \cite{densos}). The next section follows the ideas of Beidar \cite{bmi}, and in particular the way he devised of circumventing the obstacle which arises from the fact that subdirect product decomposition provided by the set of all the strongly prime quotients of a nondegenerate algebra may not interact well with the construction of algebras of quotients.

 We quote \cite{lw}, who remark when commenting on Beider's (and Mikhalev's) work on orthogonal completeness that while trying to relate polynomial identities of a prime ring $R$ to polynomial identities os its ring of quotients $Q(R)$ is quite direct, the approach to the same problem for semiprime rings suggested by the decomposition of such an $R$ as a subdirect product of its prime quotients $R/P$ faces the obstacle of the not existence of homomorphisms $Q(R)\rightarrow Q(R/P)$ in general.

 Following Beidar's approach we overcome this problem by the theory of orthogonal completeness and its attached subdirect decomposition.

 These ideas are introduced and applied in sections 6 and 7. In this later section we prove our main result, namely the answer in the affirmative of the above mentioned question on von Neumann regularity of algebras of quotients in the case of PI-algebras.

That approach was also followed by Artacho C\'{a}rdenas et al. in \cite{agr1} and \cite{agr2}.

%
%

\section{Preliminaries}\label{pre}

\begin{apartado}


  We will work with  Jordan  algebras over a unital commutative ring of scalars $\Phi$
   which will be fixed throughout. We refer to
\cite{jac-struc, mcz} for   notation, terminology and basic
results on Jordan algebras. We will occasionally make use of some results on  Jordan pairs and on Jordan triple systems, mainly
obtained from algebras. Our basic reference for Jordan pairs (and of basic notions of triple systems) is \cite{loos-jp}. In particular, we make use of the identities proved in
\cite{jac-struc} and \cite{loos-jp}, which will be quoted with the
labellings QJn and JPn respectively. In addition to that, we will often rely on an associative background, both as an ingredient of Jordan theory when dealing with special Jordan algebras, and as a source of notions which have been extended to the Jordan algebra setting, among which we will mainly consider those  from localization theory, for which we refer to \cite{st}.

We next recall some
of those basic results  and notations together with some other results  that will
be used in the paper.
\end{apartado}

\begin{apartado}A Jordan algebra has products   $U_xy$ and $x^2$, quadratic in $x$
and linear in $y$, whose linearizations are
 $U_{x,z}y=V_{x,y}z=\{x,y,z\}=U_{x+z}y-U_xy-U_zy$ and $x\circ y=V_xy=(x+y)^2-x^2-y^2$.

 We will denote by $\widehat{J} $ the free unital hull $\widehat{J}=\Phi 1\oplus
 J$ with products $U_{\alpha
 1+x}(\beta1+y)=\alpha^2\beta1+\alpha^2y+\alpha x\circ
 y+2\alpha\beta x+\beta x^2+U_xy$ and
 $(\alpha1+x)^2=\alpha^21+2\alpha x +x^2$.

It is well known that  any associative algebra $A$ gives rise to a Jordan algebra $A^+$ with products $U_xy=xyx$ and $x^2=xx$. A Jordan algebra is special if it is isomorphic to a subalgebra of an algebra $A^+$ for an associative $A$. If $A$ has an involution $\ast$ then $H(A,\ast)=\{a\in A\mid a=a^\ast\}$ is a Jordan subalgebra of $A^+$ and so are ample subspaces $H_0(A,\ast)$ of symmetric elements of $A$, subspaces such that $a+a^\ast, aa^\ast$ and $aha^\ast$ are in $H_0(A,\ast)$ for all $a\in A $ and all $h\in H(A,\ast)$.
\end{apartado}

\begin{apartado} A $\Phi$-submodule $K$ of a Jordan algebra $J$ is
an \emph{inner ideal} if $U_k\widehat{J}\subseteq K$ for all $k\in
K$. An inner ideal $I\subseteq J$ is an \emph{ideal} if
$\{I,J,\widehat{J}\}+U_JI\subseteq I$, a fact that we will denote in the usual way $I\triangleleft J$. If $I$, $L$ are ideals of
$J$, so is their product $U_IL$  and in particular   the
\emph{derived ideal} $I^{(1)}=U_II$ of $I$. An (inner) ideal  of $J$ is
\emph{essential}\  if it has nonzero intersection  with any
nonzero (inner) ideal of $J$.

A Jordan algebra $J$ is \emph{nondegenerate} if $U_xJ\neq0$ for any nonzero $x\in J$, and \emph{prime} if $U_IL\neq0$ for any nonzero ideals $I$ and $L$ of $J$. The algebra $J$ is said to be \emph{strongly prime} if $J$ is both nondegenerate and prime.
\end{apartado}

\begin{apartado}\label{annihilator_def}
If $X\subseteq J$ is a subset of a Jordan algebra $J$, the
\emph{annihilator} of $X$ in $J$ is the set $ann_J(X)$ of all
$z\in J$ such that $U_xz=U_zx=0$ and
$U_xU_z\widehat{J}=U_zU_x\widehat{J}=V_{x,z}\widehat{J}=V_{z,x}\widehat{J}=0$
for all $x\in X$. The annihilator  is always an inner ideal of $J$,
and it is an ideal if $X$ is an ideal. If $J$ is a nondegenerate
Jordan algebra  and $I$ is an ideal of $J$, then the annihilator
of $I$ in $J$ can be characterized as follows
(\cite{mc-inh,pi-ii}):
$$ann_J(I)=\{z\in J\ \mid\ U_zI=0\}=\{z\in J\ \mid\ U_Iz=0\}.$$
\end{apartado}

\begin{apartado}For a set $X$ and elements $x_1,\dots,x_n\in X$, an element $f(x_1,\dots,x_n)$ of the free associative algebra $FAss[X]$ is called admissible if at least one of its monomials of highest degree is monic \cite[p. 109]{zsss}, \cite{zazd}.

Recall (\cite[Chapter 3]{jac-struc}, \cite[IV.B]{taste}) that an element $f(x_1,\dots,x_n)$ of the free Jordan algebra $FJ[X] $ on a countable set of generators $X=\{x_1,x_2,x_3,\dots\}$ is an identity of the Jordan algebra $J$ (or equivalently that $J$ satisfies $f(x_1,\dots,x_n)=0$) if it vanishes for each substitution $f(a_1\dots,a_n)$ of the variables $x_i$ by elements $a_i$ of $J$. The algebra $J$ strictly satisfies the identity $f$ if $f$ is an identity of each scalar extension $J_\Omega = J\otimes_\Phi\Omega$ of $J$ for a commutative associative algebra $\Omega \supseteq \Phi$.

An element $f$ of the free associative algebra on the set $X$ is admissible if it has a monic leading term (\cite[p. 102]{zsss}). An element $f(x_1,\dots,x_n)$ of  $FJ[X] $ is essential if its image in the free special Jordan algebra $FSJ[X]$ under the natural projection is admissible as an associative polynomial, that is as an element of the free associative algebra on $X$ (\cite[p. 112]{zsss}, \cite{zazd}). In addition, the element $f(x_1,\dots,x_n)$ is essential if its image under the natural projection in the free special Jordan algebra $FSJ[X]$ is admissible as an associative polynomial, that is as an element of the free associative algebra on $X$.

A Jordan algebra $J$ is a PI-Jordan algebra if there exists an essential Jordan polynomial vanishing strictly on $J$ (\cite{mcz}). Note that if there exists an essential polynomial $f$ vanishing on $J$ (not necessary stritly), then there exists a multilinear essential polynomial vanishing on $J$, since the multilinearization procedure \cite[p.14]{zsss} can be applied to $f$ and simultaneously to its image in $FSJ[X]$, hence to the associative polynomial corresponding to $f$, which has a monic leading term. Therefore there exists a multilinear essential polynomial which strictly vanishes on $J$ (\cite[1.5]{zsss}), and therefore $J$ is PI.

\end{apartado}

\begin{apartado}\label{local algebras} For any element  $a$ in a Jordan algebra $J$,  the   \emph{local algebra  $J_a$
of $J$ at $a$} is the  quotient of the $a$-homotope $J^{(a)}$, the Jordan algebra defined over the $\Phi$-module $J$ with operations $U^{(a)}_xy = U_xU_ay$ and  $x^{(2,a)}= U_xa$, by the
ideal $Ker\,  a$ of $J^{(a)}$ of all the elements $x\in J$ such that
$U_ax=U_aU_xa=0$. If $J$ is nondegenerate  the above conditions on $x$
reduce  to $U_ax=0$.
Local algebras at nonzero elements of a nondegenerate (resp. strongly prime) Jordan algebra are themselves nondegenerate (resp. strongly prime) \cite[3.1.5]{jac-struc}. (We recall that similar definitions can be given for associative algebras, for which we will also use the notation $A
_x$ for the local algebra at an element $x$ of the $A$.)
\end{apartado}

 \section{Algebras of quotients}

As mentioned in the introduction, the study of algebras of quotients of Jordan algebras draws its inspiration from  associative theory  (see \cite{densos}). We recall next some basic notation from the latter and refer the reader to \cite{anillos,st} for basic results about algebras of
 quotients for associative algebras.

\begin{apartado}Let $L$ be a left ideal of an associative algebra $R$. Recall the usual associative
notation  $(L:a)$, with $a\in R$, for the
 set of all elements $x\in R$ such that $xa\in L$ (see, for instance, \cite{st}). A left ideal $L$
 of $R$ is \emph{dense} if $(L:a)b\neq0$
 for any $a\in R$ and any nonzero $b \in R$.
\end{apartado}

\begin{apartado} The associative algebras naturally arising in Jordan theory are associative
 envelopes of Jordan algebras, and therefore   carry an involution. That makes important
 to be able to extend involutions to their algebras of quotients. The fact that this is not always possible for the one-sided maximal algebras
 of quotients $Q_{max}^r(R)$ and $Q_{max}^l(R)$ leads to the  use of the maximal symmetric algebra of quotients
 $Q_\sigma(R)$ (see \cite{lanning}) as an adequate
 substitute of that algebra. Recall that $Q_\sigma(R)$  is
 the set of elements $q\in Q_{max}^r(R)$ for which there exists a
 dense left ideal $L$ of $R$ such that $Lq\subseteq R$
 (which up to a canonical isomorphism can be viewed  symmetrically as the set of all $q\in Q_{max}^l(R)$ for which there exists a
 dense right ideal $K$ of $R$ such that $qK\subseteq R$).  If $R$ has
 an involution,   $Q_\sigma(R)$ is the biggest subalgebra of $Q_{max}^r(R)$ and
 $Q_{max}^l(R)$ to which that involution extends.
 \end{apartado}

\begin{apartado}\label{new_ii} Let $J$ be a Jordan algebra, $K$ be  an inner ideal of $J$ and $a\in J$.
Following
  \cite{densos, esenciales} we set $$ (K:a)_L=\{x\in K\  \mid   x\circ a\in K  \},$$
$$ (K:a)=\{x\in K\  \mid U_ax,\ x\circ a\in K  \}.$$
It is straightforward to check that both $ (K:a)_L$ and $ (K:a)$ are  inner ideals of $J$ for
all $a\in J$, and that in addition, the containment $U_{(K:a)_L}K\subseteq (K:a)$   holds \cite[Lemma 1.2]{densos}. Given any  finite family of elements $a_1,\ \ldots, a_n\in
J$, we inductively define $(K:a_1:a_2:\ldots:a_n)=((K:a_1:\ldots
: a_{n-1}): a_n)$.\end{apartado}

\begin{apartado}\label{nii} An inner ideal $K$ of $J$ is said to be \emph{dense} if
$U_c(K:a_1:a_2:\ldots:a_n)\neq0$ for any finite collection of
elements  $a_1,\ \ldots, a_n\in J$ and any $0\neq c\in J$. Different
characterizations of density are given in \cite[Proposition 1.9]{densos}. Recall that
if $K$ is a dense inner ideal of $J$ so are the inner ideals  $(K:a)$
for all $a\in J$ \cite[Lemma 1.8]{densos}.
\end{apartado}

\begin{apartado}\label{denominadores} Let $\widetilde{J}$ be a Jordan algebra,  $J$  be  a subalgebra of
$\widetilde{J}$, and   $\widetilde{a}\in\widetilde{J}$. Recall from
\cite{pi-ii} that an element $x\in J$ is a \emph{$J$-denominator} of
$\widetilde{a}$ if the following multiplications take
$\widetilde{a}$
 back into $J$:

 \centerline{ \begin{tabular}{lll}
   (Di)\ $U_x\widetilde{a}$ &  (Dii)\ $U_{\widetilde{a}}x$ &  %
   (Diii)\ $U_{\widetilde{a}}U_x\widehat{J}$ \\
   (Diii')\ $U_xU_{\widetilde{a}}\widehat{J}$  & %
   (Div)\ $V_{x,\widetilde{a}} \widehat{J}$  & (Div')\ $V_{ \widetilde{a},x} \widehat{J}$ \\
 \end{tabular}}

 \noindent We will denote the set of $J$-denominators of $\widetilde{a}$ by
 ${\mathcal D}_J(\widetilde{a})$. It has been proved in \cite{pi-ii}
 that ${\mathcal D}_J(\widetilde{a})$ is an inner ideal of $J$. Recall also from  \cite[p. 410]{fgm} that any $x\in J$ satisfying (Di),
 (Dii), (Diii) and (Div) belongs to ${\mathcal D}_J(\widetilde{a})$. The following procedure for obtaining denominators is given in \cite[Lemma 2.2]{fgm}, and has the advantage of being `context free", that is of not depending of the overalgebra $\widetilde{J}$: for any $x\in J$, the containments $x\circ \widetilde{a}, U_x\widetilde{a}\in J$ imply $x^4\in {\mathcal D}_J(\widetilde{a})$.
\end{apartado}

\begin{apartado}\label{aq} Let $J$ be a subalgebra of a Jordan algebra $Q$. Following \cite{densos}, we say that $Q$ is a \emph{general algebra of quotients of $J$} if the
following conditions hold
\begin{enumerate}\item[(AQ1)] ${\mathcal D}_J(q)$ is a dense
inner ideal of $J$ for all $q\in Q$. \item[(AQ2)] $U_q {\mathcal
D}_J(q)\neq0$ for any nonzero $q\in
Q$.\end{enumerate}

Note that any nondegenerate Jordan algebra is its own algebra of
quotients. Conversely any Jordan algebra having an algebra of
quotients is nondegenerate.

A different, though closely related approach to algebras of quotients was carried out in \cite{esenciales} by using essential inner ideals as sets of denominators. That second approach, which in fact motivated and inspired the one in \cite{densos} as well as some other treatments of algebras of quotients in Jordan algebras (see the references in \cite{densos}), has the advantage that checking essentiality is significantly simpler than checking density. However, for that choice to be feasible, the additional condition of strong nonsingularity, introduced in \cite{esenciales}, is needed. We will  comment on that at the end of the present section.\end{apartado}

\begin{apartado}\label{def}  An algebra  of quotients $Q$ of a Jordan algebra $J$
is said to be a \emph{maximal  algebra of quotients} of $J$ if for
any other algebra of quotients $Q'$ of $J$ there exists a
homomorphism $\alpha: Q' \to Q$ whose restriction to $J$ is the
identity map.
\end{apartado}

\begin{apartado} \label{ampleher}

Maximal algebras of quotients of nondegenerate Jordan algebras have been proved to exist and described in \cite{densos}.  We recall now some of the features of that construction.

Following \cite[4.9]{densos}, we first consider the Zelmanov polynomial  in the variables $X_{(i)}= \{x_i,y_i,z_i,t_i\}$ $(i=1,2,3)$ defined in \cite[pp.192-195]{mcz} given by $$Z_{48}= [[P_{16}(X^{(1)}), P_{16}(X^{(2)})], P_{16}(X^{(3)})]$$
for $P_{16}(X) = [[[t,[t,z]]^2,[t,w]],[t,w]]$  $(t= [x,y])$, where $[[a,b]c]= \{a,b,c,\}-\{b,c,a\}$. Denote now by ${\cal Z}(X)$ the hermitian ideal of the free Jordan algebra $FJ[X]$ over a countable set of generators $X$ generated by all the evaluations $Z_{48}(a_1,\dots, a_{12})$ for $a_i\in FJ[X]$.

Together with the hermitian case where the algebra $J$ has $ann_{J}({\cal Z}(J))=0$, we consider algebras $J$ that satisfy ${\cal Z}(J)=0$, an therefore are PI. These two extremes appear in a decomposition of the algebra as a subdirect sum related to the ideal ${\cal Z}(J)$. Let $J$ be a nondegenerate Jordan algebra. Then:
\begin{enumerate}\item There are two strongly prime ideals $P_1, P_2\triangleleft J$ that satisfy:
 $P_i= ann_J(P_j)$, ($\{i,j\}= \{1,2\}$, $J_1= J/P_1$ is a PI-algebra, and  $J_2=J/P_2$ is a special Jordan algebra such that $ann_{J_2}({\cal Z}(J_2))=0$.
 \item The maximal algebra of 	quotients of $J$ is a direct product $Q_{max}(J)\cong Q_{max}(J_1)\boxplus Q_{max}(J_2)$, where
 \item $Q_{max}(J_1) = (J_1)_{{\cal E}(\Gamma(J_1))}$ is the almost classical algebra of quotients: the module of $\E(\Gamma(J_1))$-quotients, $(J_1)_{\E(\Gamma(J_1))} = \displaystyle\lim_{\longrightarrow}\{\Hom({\frak a}, J_1)\mid {\frak a} \in \E(\Gamma(J_1))\}$, for the filter $\E(\Gamma(J_1))$ of essential ideals of the nonsingular ring $\Gamma(J_1)$, the centroid of $J_1$, endowed with the natural Jordan algebra structure (see \cite[Theorem 3.11]{densos})
 \item $Q_{max}(J_2) = \{q\in H(Q_\sigma(R),\ast) \mid {\cal D}_J(q)  \; \hbox{\rm is dense in} \; J_2 \}$ is an ample subspace $H_0(Q_\sigma(R),\ast)$ in the set of symmetric elements of the symmetric algebra of quotients $Q_\sigma(R)$ of a $\ast$-tight enveloping algebra $R$ of $J_2$

\end{enumerate}

Maximal algebra of quotients thus defined provided an unified framework for other classes of algebras of quotients found in the literature (see \cite{densos} and references therein). In particular, the algebra of quotients obtained when taking as a filter of denominator inner ideals the set of essential inner ideals, which provides a Jordan version of Johnson's algebra of quotients, can be defined whenever that set of inner ideals is a linear topological filter (see \cite[12]{esenciales}), and this is a consequence of strong nonsingularity, the Jordan generalization of nonsingularity: a Jordan algebra $J$ is strongly nonsingular if for any essential inner ideal $K$, and any $a\in J$, $U_{a}K=0$ implies $a=0$ \cite[1.3]{esenciales}. For a strongly nonsingular Jordan algebra an inner ideal is essential if and only if it is dense \cite[1.10]{densos} (the reciprocal, namely that if a nondegenerate Jordan algebra essential inner ideals and dense inner ideals coincide then the algebra is strongly nonsingular, is obvious).

\end{apartado}

\section{A counterexample}

We next address the problem posed in the introduction. Concretely we examine whether the maximal algebra of quotients of a strongly nonsingular Jordan algebra is von Neumann regular.

Since the maximal algebra of quotients of a strongly nonsingular Jordan algebra is its Johnson algebra of quotients, the problem amounts to examining whether the Johnson algebra of quotients of a strongly nonsingular Jordan algebra is von Neumann regular, or still, whether the maximal algebra of quotients of a Jordan algebra is von Neumann regular provided that inner ideals are dense if and only if they are essential.

Our first result consists of showing that the answer to that question is negative in general. However we will afterwards give an important instance at which the answer is positive, namely the case of PI-algebras.

Consider $J=H(D,\ast)$ for a domain $D$ with involution $\ast$, which is obviously strongly nonsingular.

We describe $Q_{max}(H(D,\ast))$.

In case $Z(H(D,\ast))\neq0$, $Q_{max}(H(D,\ast))$ can be described as follows according to \ref {ampleher} (4):

$$Q_{max}(H(D,\ast))=H(Q_{\sigma}(D),\ast)$$

where $Q_{\sigma}(R)$ for an associative ring $R$ with involution $\ast$ is the Maximal algebra of symmetric quotients \cite{lanning}: the set of all $q\in Q_l(R)$ such that $qK \subseteq R$ for some essential right ideal $K$ of $R$, which is the biggest subalgebra of $Q_l(R)$ to which $\ast$ can be extended.

\begin{lema} \label{regularisinvertible} $Q_{\sigma}(D)$ is a (unital) domain, hence von Newmann regular elements of $Q_{\sigma}(R)$ are invertible.\end{lema}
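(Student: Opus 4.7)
I would establish the two assertions of the lemma in turn. For the first, that $Q_{\sigma}(D)$ is a (unital) domain, I argue by contradiction. Suppose $p, q \in Q_{\sigma}(D)$ are both nonzero and satisfy $pq = 0$. By the definition of $Q_{\sigma}(D)$ recalled above, there is a dense left ideal $L$ of $D$ with $Lp \subseteq D$, and by its symmetric formulation there is a dense right ideal $K$ of $D$ with $qK \subseteq D$. The standard feature of dense one-sided denominator ideals in the construction of the maximal quotient rings is that $p \neq 0$ forces $Lp \neq 0$ and $q \neq 0$ forces $qK \neq 0$; hence one can pick $l \in L$ and $k \in K$ with $d_1 := lp$ and $d_2 := qk$ both nonzero in $D$. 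Then in $Q_{\sigma}(D)$,
\[ d_1 d_2 \;=\; (lp)(qk) \;=\; l(pq)k \;=\; 0, \]
and since $d_1, d_2 \in D \setminus \{0\}$ this contradicts that $D$ is a domain.

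The second assertion is then a purely ring-theoretic consequence. Let $0 \neq a \in Q_{\sigma}(D)$ satisfy $aba = a$ for some $b \in Q_{\sigma}(D)$. Then $a(ba - 1) = 0$ in the unital domain $Q_{\sigma}(D)$, which together with $a \neq 0$ forces $ba = 1$; symmetrically $(ab - 1)a = 0$ yields $ab = 1$, so $b$ is a two-sided inverse of $a$.

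The main obstacle, modest as it is, lies in the density step of the first paragraph: one needs that $Q_{\sigma}(D)$ embeds into both $Q^{r}_{max}(D)$ and $Q^{l}_{max}(D)$, and that in each of these no nonzero element is annihilated on the appropriate side by a dense one-sided ideal of $D$. Granted that standard fact about the construction of $Q_{\sigma}$, the rest of the argument is nothing more than associativity together with the domain property of $D$.
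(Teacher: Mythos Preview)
Your proof is correct and follows essentially the same route as the paper. The paper dismisses the first assertion as ``obvious'' while you spell out the standard density argument, and for the second assertion the paper factors $de(de-1)=0$ from $dede=(ded)e=de$ whereas you go directly via $a(ba-1)=0$ and $(ab-1)a=0$; both are the same cancellation trick in a domain.
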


\begin{proof} The first assertion is obvious, and for the second note that if $d \in D$ has $ded=d$ and $ede=e$ for some $e \in D$, hence $dede=(ded)e=de$ hence $de(de-1)=0$, thus $de=1$ since $D$ is a domain. Analogously $ed=1$, hence $d$ is invertible. \end{proof}

As a particular case of that situation we obtain the counterexample we are looking for. Take $D=Ass[X,Y]$ the free associative algebra of two generators $X,Y$ with the standard involution, so that $H(D,\ast)=FSJ[X,Y]$ is the free special Jordan algebra on two generators.

\begin{proposicion} For the case $(D,\ast)$, $J=H(D,\ast)$ is strongly nonsingular, and it is not von Newmann regular\end{proposicion}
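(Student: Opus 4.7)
The plan is to handle the two assertions separately: strong nonsingularity of $J$ will follow at once from the domain hypothesis on $D$, while the failure of von Neumann regularity of $Q_{max}(J)$ reduces, via the description in \ref{ampleher}~(4), to showing that the generator $X\in J$ is not invertible in $Q_\sigma(D)$.

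For strong nonsingularity, let $K$ be an essential inner ideal of $J$ and $a\in J$ with $U_aK=0$. Since $J\subseteq D^+$, the equality $U_ak=aka=0$ in $D$, together with $D$ being a domain, forces $k=0$ for every $k\in K$ as soon as $a\neq 0$; this makes $K=0$, contradicting the essentiality of $K$ in the nonzero algebra $J$. Hence $a=0$ and $J$ is strongly nonsingular.

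For the failure of von Neumann regularity, observe that $D=Ass[X,Y]$ equipped with the standard involution is a $\ast$-tight enveloping algebra of $J=H(D,\ast)$, so \ref{ampleher}~(4) yields $Q_{max}(J)\subseteq H(Q_\sigma(D),\ast)\subseteq Q_\sigma(D)$. Suppose $Q_{max}(J)$ is von Neumann regular. Then $X\in J\subseteq Q_{max}(J)$ admits $q\in Q_{max}(J)$ with $U_Xq=X$, i.e.\ $XqX=X$ in $Q_\sigma(D)$. By Lemma~\ref{regularisinvertible}, $Q_\sigma(D)$ is a unital domain; the factorizations $(Xq-1)X=X(qX-1)=0$ and $X\neq 0$ therefore yield $Xq=qX=1$, so $X$ would be invertible in $Q_\sigma(D)$.

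The main obstacle is to exclude this invertibility. Any such $q\in Q_\sigma(D)$ admits, by definition, a dense left ideal $L$ of $D$ with $Lq\subseteq D$; writing $a=(aq)X$ for $a\in L$ gives $L\subseteq DX$. It remains to verify that no dense left ideal of $D$ fits inside $DX$. For this I would compute $(DX:Y)=0$ in the free algebra $D=Ass[X,Y]$: the containment $xY\in DX$ would yield $xY=pX$ with $p\in D$, but every monomial of $xY$ ends in the letter $Y$ whereas every monomial of $pX$ ends in $X$, so both sides must vanish and $x=0$. Consequently $(L:Y)\subseteq(DX:Y)=0$, which gives $(L:Y)\cdot b=0$ for every $b\in D$ and contradicts density of $L$. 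This produces the desired counterexample. (The same last-letter argument applied inside $D$ itself also shows that $X$ is not invertible in $D$, so no $j'\in J$ can satisfy $Xj'X=X$, giving the literal non-regularity of $J$ as well.)
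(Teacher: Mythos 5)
Your proof is correct and follows essentially the same route as the paper: strong nonsingularity from the domain property of $D$, then assuming von Neumann regularity of $Q_{max}(J)$, using Lemma~\ref{regularisinvertible} to conclude $X$ is invertible in $Q_\sigma(D)$, whence some dense (essential) left ideal $L$ lands inside $DX$, and finally deriving a contradiction from the ``last letter'' structure of the free algebra. The only cosmetic difference is that the paper phrases the final contradiction as $DX\cap DY=0$ violating essentiality, while you phrase it as $(DX:Y)=0$ violating density; these are the same observation about monomials in $Ass[X,Y]$.
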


\begin{proof} Note first that  $J=H(D,\ast)$ is a domain, hence $J$ is a strongly nonsingular (any nonzero left ideal has zero annihilator). Now if $Q_{\sigma}(J)$ is von Neumann regular, $X \in D \subseteq J$ is von Newmann regular, hence it is invertible in $Q\sigma (D)$ as seen before \ref{regularisinvertible}. Therefore there is an essential left ideal $L$ of $D$ with $LX^{-1} \subseteq D$, hence $L \subseteq DX$, but since $L$ is essential so is $DX$ hence $DX \cap DY \neq 0$, which obviously does not hold for $D=Ass[X,Y]$, a contradiction. \end{proof}

\section{Centroid}

As mentioned in the outline of the contents of de paper in section 1, the module structure of a Jordan algebra over its centroid provides the tools that allow the study of algebras of quotients of PI Jordan algebras through the localization with respect to the filter of essential ideals. In this section we deal with the centroid of Jordan algebras and their algebras of quotients, the localization of these rings and the Jordan algebras as modules over these rings, and their relation to the extended centroid of quadratic Jordan algebras.

We begin with the following fact that shows that the maximal ring of quotients is a ``idempotent" construction.

The notations we make use of are the ones introduced in \cite{densos}.

\begin{lema}\label{maxclosure}
If $J$ is  a nondegenerate Jordan algebra, then $Q_{max}(Q_{max}(J))
= Q_{max}(J)$.\end{lema}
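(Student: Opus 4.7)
My plan is to show that $Q := Q_{max}(J)$ already has, relative to itself, the universal property defining $Q_{max}(Q)$, so the two must coincide. First, (AQ2) applied at any nonzero $q\in Q$ gives $U_q Q\supseteq U_q\mathcal D_J(q)\neq 0$, so $Q$ is nondegenerate and $Q_{max}(Q)$ exists by \ref{ampleher}. The strategy is: (i) prove transitivity, namely that $Q_{max}(Q)$ is an algebra of quotients of $J$; (ii) extract the canonical morphism $\alpha:Q_{max}(Q)\to Q$ fixing $J$ guaranteed by maximality of $Q$ over $J$; (iii) check that $\alpha$ is injective; (iv) check that $\alpha|_Q=\mathrm{id}_Q$, and conclude $Q_{max}(Q)=Q$.

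For step (i) I would verify (AQ1) and (AQ2) for the pair $J\subseteq Q_{max}(Q)$. Given $q''\in Q_{max}(Q)$, I have $\mathcal D_Q(q'')$ dense in $Q$, and for each $z\in\mathcal D_Q(q'')$ the inner ideal $\mathcal D_J(z)$ is dense in $J$ because $J\subseteq Q$ is an algebra of quotients. The bridge between the two filtrations is the ``context-free'' denominator trick from \ref{denominadores}: if $x\in J$ satisfies $x\circ q'',U_x q''\in J$, then $x^4\in\mathcal D_J(q'')$. So I would pick $z\in\mathcal D_Q(q'')$ and then choose $x$ in a common dense intersection of $\mathcal D_J(z)$, $\mathcal D_J(z\circ q'')$ and $\mathcal D_J(U_zq'')$ to force $x\circ q'',U_xq''\in J$; tracking density through the constructions $(K:a)$ as in \ref{nii} should give that $\mathcal D_J(q'')$ is dense in $J$, and (AQ2) over $J$ for nonzero $q''$ follows similarly by using (AQ2) for $Q_{max}(Q)$ over $Q$ and then multiplying the resulting witness by enough $x^4$'s to land in $J$.

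Once (i) is done, maximality of $Q=Q_{max}(J)$ yields $\alpha:Q_{max}(Q)\to Q$ fixing $J$. Injectivity is automatic: if $\alpha(q'')=0$ then for every $x\in\mathcal D_J(q'')$ we get $U_{q''}x=\alpha(U_{q''}x)=U_{\alpha(q'')}x=0$, so $U_{q''}\mathcal D_J(q'')=0$ and (AQ2) forces $q''=0$. For step (iv) I would show that any Jordan endomorphism $\phi:Q\to Q$ fixing $J$ is the identity; writing $r:=\phi(q)-q$ for $q\in Q$, the denominator conditions (Di)--(Div') at $x\in\mathcal D_J(q)$ translate into $U_xr=0$ and $x\circ r=0$; the Jordan identity $V_xV_x=2U_x+V_{x^2}$ (and its iteration $x\mapsto x^2$) then propagates these to $U_{x^2}r=0,\,x^2\circ r=0$, and finally to $U_{x^4}r=V_{x^4}r=0$, with the context-free trick giving $x^4\in\mathcal D_J(r)$ and in fact $U_{x^4}r=0$. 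Density of $\mathcal D_J(q)$ in $J$ together with (AQ2) applied to $r$ then yields $r=0$. Applying this uniqueness to $\phi=\alpha|_Q$ gives $\alpha|_Q=\mathrm{id}_Q$; then for any $q''\in Q_{max}(Q)$, $\alpha(q'')\in Q$ so $\alpha(\alpha(q''))=\alpha(q'')$, and injectivity of $\alpha$ forces $q''=\alpha(q'')\in Q$, whence $Q_{max}(Q)=Q$.

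The hardest step I expect is the transitivity verification (AQ1) in step (i): propagating density through two stacked layers of denominator extraction requires some bookkeeping to make sure the intersection of the relevant dense filters remains dense in $J$. The uniqueness argument in step (iv) is the next delicate point, since one must squeeze a contradiction from only pointwise annihilation of $r$ by $U_{x^4}$ and $V_{x^4}$; this is exactly where the combination of the context-free $x^4$ construction and (AQ2) is essential.
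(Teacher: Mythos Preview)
Your overall strategy is exactly the paper's: transitivity makes $Q_{max}(Q)$ an algebra of quotients of $J$, maximality of $Q$ yields a $J$-homomorphism $\alpha:Q_{max}(Q)\to Q$, and uniqueness of $J$-homomorphisms between algebras of quotients forces $\alpha$ and the inclusion $\iota:Q\hookrightarrow Q_{max}(Q)$ to be mutually inverse. The paper does not reprove either ingredient: it quotes transitivity from \cite{densos} and uniqueness from \cite[Lemma~2.11]{densos}, so the whole proof is three lines. Your injectivity argument (iii) and the final deduction $q''=\alpha(q'')$ from $\alpha|_Q=\mathrm{Id}_Q$ are correct and amount to the same endgame.

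The genuine gap is in step~(iv). From $x\in K:=\mathcal D_J(q)$ you correctly get $U_xr=0$ and $x\circ r=0$, and hence $U_{x^4}r=0$ together with $x^4\in\mathcal D_J(r)$ via the context-free trick. But (AQ2) asserts $U_r\,\mathcal D_J(r)\ne 0$ for $r\ne 0$; it says nothing about $U_{\mathcal D_J(r)}r$. Knowing $U_{x^4}r=0$ for many $x$ does not contradict $U_r\,\mathcal D_J(r)\ne 0$, so the sentence ``(AQ2) applied to $r$ then yields $r=0$'' is not justified by what you have established. What one actually needs is control over $U_r$ on a dense piece of $J$ (so that, say, $U_{U_ry}K=U_rU_yU_rK=0$ contradicts density), and extracting this from the denominator axioms requires using (Dii), (Diii), (Div), (Div$'$) together and handling the cross term $U_{r,q}x=\{r,x,q\}$ with $q\notin\widehat J$; this is precisely the content packaged into \cite[Lemma~2.11]{densos}. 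Your step~(i) is, as you yourself say, only a sketch of the transitivity lemma from \cite{densos}. The cleanest repair is simply to cite both results, which is what the paper does.
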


\begin{proof} Set $Q = Q_{max}(J)$, and let $\tilde Q\supseteq Q$ be an algebra of quotients of
$Q$. By the transitivity of algebras of quotients, $\tilde Q$ is an algebra of quotients of
$J$. Therefore, there exists a $J$-homomorphism (i.e. a
homomorphism that restricts to the identity mapping on $J$)
$\alpha: \tilde Q \rightarrow Q$ by the maximality of $Q$. Denote by $\iota: Q\rightarrow \tilde Q$ the
inclusion. By the uniqueness \cite[Lemma 2.11]{densos} of $J$-homomorphisms of algebras of quotients, we have $\iota\alpha = {\rm Id}_{\tilde
Q}$, and $\alpha\iota= {\rm Id}_Q$. In particular, $\iota$ is surjective, hence $Q= \tilde
Q$.\end{proof}

\begin{apartado}Note that the previous lemma is valid for $\Phi$-algebras, algebras over a fixed ring of
scalars $\Phi$. On the other hand, the proof of the fact that $Q_{max}(J) = J_{{\mathcal E}(J)}$ for a
Jordan PI-algebra $J$ shows that this is the maximal algebra of quotients when $J$ is
considered as a $\Phi$-algebra over any ring of scalars $\Phi$  which makes $J$ a $\Phi$-algebra (that is, for which the multiplication by elements of $\Phi$ defines an operator of the centroid). In particular this is the
case for $\Phi = {\bf Z}$, the ring of integers. Now, since any Jordan $\Phi$-algebra of
quotients is a ${\bf Z}$-algebra of quotients, we get that the fact that $J_{\mathcal E(J)}$ is the
maximal algebra of quotients of $J$ is independent of the ring of scalars. Moreover, this
also shows that $(J_{\mathcal E(J)})_{{\mathcal E}(J_{\Gamma_{\mathcal E}(J)})}= J_{\mathcal E(J)}$. Note however that we may have $\Gamma_{{\mathcal E}(J)}
\ne \Gamma(J_{{\mathcal E}(\Gamma)}) = \Gamma(J_{{\mathcal E}(\Gamma)})_{{\mathcal E}(\Gamma_{{\mathcal E}(J)})}$.

In the associative theory it is well known the centroids of the maximal rings of quotients (as well as other rings of quotients) of a semiprime ring coincide with its extended centroid. For quadratic Jordan algebras, the definition of extended centroid was introduced in \cite{pi-ii}. Here we relate the extended centroid of a quadratic Jordan PI algebra to the centroid of its maximal ring of quotients.

\end{apartado}

\begin{lema}Let $J$ be a nondegenerate PI Jordan algebra. Then $\Gamma_{{\mathcal E}(J)}
=\C(J)$.\end{lema}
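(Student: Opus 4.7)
The plan is to identify both $\Gamma_{\mathcal{E}(J)}$ and $\mathcal{C}(J)$ with the centroid of the maximal algebra of quotients $Q_{max}(J)$, from which the stated equality is immediate. The starting point is the description recorded in \ref{ampleher}(3): since $J$ is nondegenerate and PI, $Q_{max}(J)=J_{\mathcal{E}(\Gamma(J))}$ is the module of quotients of $J$ relative to the filter of essential ideals of its centroid. Since $\Gamma(J)$ is a nonsingular commutative ring, its localization at essential ideals, which is $\Gamma_{\mathcal{E}(J)}$, is the classical maximal ring of quotients of $\Gamma(J)$, hence a von Neumann regular self-injective ring.

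First I would produce an injection $\Gamma_{\mathcal{E}(J)}\hookrightarrow \Gamma(Q_{max}(J))$: the universal property of localization allows every compatible family $\{f_{\mathfrak a}:\mathfrak a\to \Gamma(J)\}_{\mathfrak a\in\mathcal{E}(\Gamma(J))}$ to act on $J_{\mathcal{E}(\Gamma(J))}=Q_{max}(J)$ through the centroid action of $\Gamma(J)$ on $J$, and this action commutes with the quadratic structure because $\Gamma(J)$ already does. Injectivity follows from nondegeneracy of $J$ together with nonsingularity of $\Gamma(J)$. For the reverse inclusion, given $\lambda\in\Gamma(Q_{max}(J))$ and $j\in J$, the denominator $\mathcal{D}_J(\lambda\cdot j)$ is dense in $J$ by (AQ1), hence essential. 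The crucial PI input now enters through the Fulgham/Posner--Rowen theorem cited in the introduction and in \cite[3.6]{fgm}: every nonzero inner ideal of a nondegenerate PI Jordan algebra meets the weak center nontrivially. This lets us replace the essential inner ideal $\mathcal{D}_J(\lambda\cdot j)$ by an essential ideal $\mathfrak a\subseteq\Gamma(J)$ on which $\lambda$ induces a $\Gamma(J)$-linear map into $\Gamma(J)$, exhibiting $\lambda$ as an element of $\Gamma_{\mathcal{E}(J)}$.

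Second, I would identify $\mathcal{C}(J)$ with $\Gamma(Q_{max}(J))$: this is the Jordan analogue of the classical identification of the extended centroid of a semiprime associative ring with the centroid of its maximal ring of quotients, and, given the definition of $\mathcal{C}(J)$ in \cite{pi-ii} as an inductive limit of centroid-like partial maps defined on essential inner ideals, it follows in the PI setting from the coincidence of density and essentiality there, together with the fact that $Q_{max}(J)$ is the limit over the same filter. Combining the two identifications gives $\Gamma_{\mathcal{E}(J)}=\Gamma(Q_{max}(J))=\mathcal{C}(J)$. The main obstacle will be the functoriality of the Fulgham-based passage from essential inner ideals of $J$ to essential ideals of $\Gamma(J)$: one must check that the compatibility between overlapping denominators for $\lambda$ descends to a well-defined element of the direct limit $\Gamma_{\mathcal{E}(J)}$ rather than merely a representative modulo a coarser equivalence, which amounts to bookkeeping on the correspondence between the two filters under the PI hypothesis.
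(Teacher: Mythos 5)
Your plan is to identify both sides with $\Gamma(Q_{max}(J))$, but the paper goes out of its way to warn that neither identification is available. Immediately after the preceding remark it states explicitly that one may have $\Gamma_{{\mathcal E}(J)} \ne \Gamma(J_{{\mathcal E}(\Gamma)})=\Gamma(Q_{max}(J))$, so your first step, injecting $\Gamma_{\mathcal E(J)}$ into $\Gamma(Q_{max}(J))$ and then arguing for surjectivity, is attempting to prove an equality the author specifically flags as possibly false. Your second step is in even worse shape: you invoke ``the Jordan analogue of the classical identification of the extended centroid of a semiprime associative ring with the centroid of its maximal ring of quotients,'' but the paragraph right after Lemma~\ref{centr} states that this does \emph{not} strictly carry over to Jordan algebras, because the naive extended centroid $\C(J)$ of a quadratic Jordan algebra need not be closed; the correct statement is only proposed as a conjecture, and even then with the transfinite closure $\C_\infty(J)$ replacing $\C(J)$. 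So the chain $\Gamma_{\mathcal E(J)}=\Gamma(Q_{max}(J))=\C(J)$ has both links broken, and the strategy of routing the proof through $Q_{max}(J)$ cannot succeed.

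The paper's actual proof never passes to $Q_{max}(J)$. It works entirely at the level of $J$ and $\Gamma=\Gamma(J)$, constructing explicit mutually inverse maps between the two direct limits. Given $[f,\frak a]\in\Gamma_{\mathcal E}$ with $\frak a$ an essential ideal of $\Gamma$, one pushes $f$ forward to $\tilde f:\frak a J\to J$ by $\tilde f(\sum\alpha_i x_i)=\sum f(\alpha_i)x_i$; that $\frak a J$ is an essential ideal of $J$ and that $\tilde f$ is well defined both rest on $J$ being a nonsingular $\Gamma$-module, which is where nondegeneracy and PI enter via the Posner--Rowen/Fulgham argument of \cite[3.6]{fgm}. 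In the other direction, $[f,I]\in\C(J)$ with $I$ essential in $J$ is pulled back to $\tilde f:(I:J)_\Gamma\to\Gamma$ by $\tilde f(\delta)=f\circ\delta$, using that the contraction $(I:J)_\Gamma$ is essential in $\Gamma$ — again a PI fact. One then checks $\phi\psi=\mathrm{Id}$ and $\psi\phi=\mathrm{Id}$ by a direct computation on representatives. You correctly identified that the PI hypothesis must enter through the weak center and that the heart of the matter is a correspondence between the filter of essential ideals of $J$ and the filter of essential ideals of $\Gamma(J)$; but that correspondence has to be built by hand into the bijection, not smuggled in through $Q_{max}(J)$, precisely because $\Gamma(Q_{max}(J))$ can be strictly larger than both sides of the equality you are trying to prove.
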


\begin{proof} Take an element $\gamma \in \Gamma_{\mathcal E}$, and a representative $[f,\frak a]\in \gamma$. Then $I =\mathfrak
a J$ is an essential ideal of $J$ by \cite[3.7]{densos}. Define $\tilde f: I \rightarrow J$ by
$\tilde f(\sum_i \alpha_i x_i) = \sum_if(\alpha_i)x_i$ for $\alpha_i \in \mathfrak a$ and $x_i\in J$.
To see that $\tilde f$ is well defined, suppose that $\sum_i \alpha_i x_i= 0$ for some
$\alpha_i \in \mathfrak a$ and $x_i\in J$, and set $b = \sum_if(\alpha_i)x_i$. Then, for any
$\alpha \in \mathfrak a$ we have $\alpha b=\alpha \sum_if(\alpha_i)x_i = \sum_if(\alpha_i\alpha)x_i
=\sum_if(\alpha)\alpha_ix_i= f(\alpha)\sum_i\alpha_ix_i= 0$, hence $\mathfrak ab = 0$, and
thus $b = 0$ by [M, 3.6(2)]. Now it is easy to see that $\tilde f \in \Hom_J(I,J)$ (note
that $\tilde f(U_{\alpha x}y) = \tilde f(\alpha^2U_xy)= \alpha f(\alpha)U_xy\in \mathfrak aJ=I$).
Moreover, if $\gamma$ admits the two representatives $(f,\mathfrak a)$, and $(g,\mathfrak b)$, a straightforward verification shows that
$\tilde f = \tilde g$ on $(\mathfrak a\cap \mathfrak b)J$, which is an essential ideal of $J$ to which
both $\tilde f$ and $\tilde g$ restrict. Thus, the rule $[f,\mathfrak a] \mapsto [\tilde f, \mathfrak
aJ]\in \C(J)$ defines a mapping $\phi:J_\E\rightarrow \C(J)$ which is easily seen to be a
homomorphism.

Now take $\gamma  \in \C(J)$ , and set $\gamma = [f,I]$ for some essential $I\triangleleft J$, and $f
\in \Hom_J(I,J)$. Define $\tilde f: (I:J)_\Gamma\rightarrow \Gamma$ by $\tilde
f(\delta)= f\delta$ (where juxtaposition stands for composition). Note that since $\delta(J)\subseteq I$ for any
$\delta \in \mathfrak (I:J)_\Gamma$, $f$ is defined on the image of $\delta$, hence $\tilde f$ is well defined. Now, since $(I:J)_\Gamma$
is essential, this defines an element $[\tilde f, (I:J)_\Gamma]\in \Gamma_\E$. Moreover, it
is easy to see that the class $[\tilde f, (I:J)_\Gamma]\in \C(J)$ does not depend of the
selected representative $(f,I)\in \gamma$, and therefore, the correspondence $[f,I]\mapsto
[\tilde f, (I:J)_\Gamma]$ defines a mapping $\psi: \Gamma_\E\rightarrow \C(J)$.

Take $\gamma =[f,\mathfrak a] \in \Gamma_\E$, and consider $\phi(\gamma) = [\tilde f,{ \mathfrak a} J]$
as before. Then $\mathfrak a \subseteq ({\mathfrak a}J:J)_\Gamma$, and for any $\alpha \in{ \mathfrak a}$ we have
$\psi(\phi(\gamma)) = [\tilde{\tilde f}\;,({\mathfrak a}J:J)_\Gamma] = [\tilde{\tilde f}\;,\mathfrak a]$
where $\tilde{\tilde f}\;(\alpha) = \tilde f\alpha$. Now, if
$x\in J$, we have $(\tilde f\alpha) x = \tilde f(\alpha x) = f(\alpha)x$, hence
$\tilde{\tilde f\;}(\alpha) = f(\alpha)$, and thus $\psi\phi = {\rm Id}_{\Gamma_\E}$. On
the other hand, if $\gamma = [g, I] \in \C(J)$, we have $\psi(\gamma) = [\tilde g,
(I:J)_\Gamma]$ as before, and $\phi(\psi(\gamma)) = [\tilde{\tilde g},(I:J)_\Gamma J]$,
where $\tilde{\tilde g}(\alpha x) = \tilde g(\alpha)x = (g\alpha)(x) = g(\alpha x)$ for any
$x\in J$ and any $\alpha \in (I:J)_\Gamma$. Thus $\tilde{\tilde g} = g$ on $(I:J)_\Gamma
J$, hence $\phi(\psi(\gamma)) = [g, I] =\gamma$, and we get $\phi\psi= {\rm
Id}_{\C(J)}$.\end{proof}

\begin{lema}\label{centr}Let $J$ be a nondegenerate Jordan algebra. Then $\Gamma(Q_{max}(J)) =
\C(Q_{max}(J))$.\end{lema}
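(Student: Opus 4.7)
The inclusion $\Gamma(Q_{max}(J)) \subseteq \C(Q_{max}(J))$ is immediate: any $\gamma \in \Gamma(Q_{max}(J))$ is represented by the extended centroid class $[\gamma, Q_{max}(J)]$, and the assignment is injective by the nondegeneracy of $Q_{max}(J)$ (which holds since $Q_{max}(J)$ has itself as an algebra of quotients, see \ref{aq}).

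For the reverse inclusion set $Q = Q_{max}(J)$ and exploit the self-saturation $Q_{max}(Q) = Q$ provided by Lemma~\ref{maxclosure}. Given a class $[f, I] \in \C(Q)$ with $I$ an essential ideal of $Q$ and $f \colon I \to Q$ a centroid-compatible module homomorphism, the plan is to extend $f$ to an element $\tilde{f} \in \Gamma(Q)$. For each $q \in Q$, the set of $x \in Q$ with $U_x q, x \circ q \in I$ is a dense inner ideal of $Q$ by the denominator and density machinery of \ref{new_ii}--\ref{nii} and \ref{denominadores}. For such $x$, the values $f(U_x q)$ and $f(x \circ q)$ prescribe exactly what $U_x \tilde{f}(q)$ and $x \circ \tilde{f}(q)$ must be; by nondegeneracy these prescriptions determine at most one candidate $\tilde{f}(q) \in Q$, and the existence of a candidate is guaranteed precisely by the rational-completeness $Q_{max}(Q) = Q$, which prevents such a coherent family of denominator data from escaping into a strictly larger algebra of quotients.

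Once the pointwise assignment $q \mapsto \tilde{f}(q)$ is in hand, verifying that it is $\Phi$-linear and commutes with $U$ and the squaring on all of $Q$ is a routine density propagation of the corresponding properties of $f$, and the identity $\tilde{f}|_I = f$ is immediate from the defining compatibility. This represents $[f, I]$ by the centroid element $\tilde{f} \in \Gamma(Q)$, completing the reverse inclusion.

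\textbf{Main obstacle.} The delicate step is the existence of $\tilde{f}(q)$ as an honest element of $Q$, i.e.\ the actual realization of the formal quotient determined by the denominator data. The natural route is through the decomposition $Q_{max}(J) \cong Q_{max}(J_1) \boxplus Q_{max}(J_2)$ of \ref{ampleher}, handled summand by summand. On the PI factor $Q_{max}(J_1)$ one invokes the preceding lemma applied to $Q_{max}(J_1)$ itself, which is again nondegenerate PI and, by Lemma~\ref{maxclosure}, self-saturated, yielding $\C(Q_{max}(J_1)) = \Gamma_{\E(Q_{max}(J_1))} = \Gamma(Q_{max}(J_1))$. On the hermitian factor $Q_{max}(J_2) \subseteq H(Q_\sigma(R), \ast)$ one descends to the $\ast$-tight associative envelope $R$, invokes the analogous associative identification of the extended centroid of $R$ with the centroid of $Q_\sigma(R)$, and transports the resulting operator back through the ample subspace structure. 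This produces $\tilde f$ on each factor, and hence on $Q$.
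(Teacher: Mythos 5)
Your overall strategy---exploit the self-saturation $Q_{max}(Q_{max}(J))=Q_{max}(J)$ from Lemma~\ref{maxclosure} to force every extended centroid class to land in the centroid---is the right one, but your execution is considerably more complicated than what is needed, and one of its steps is not justified. The paper's proof packages the whole reverse inclusion into one clean observation: the central closure $\C(Q)Q$ (with $Q=Q_{max}(J)$) is itself an algebra of quotients of $Q$. Hence $Q_{max}(Q)=Q$ forces $\C(Q)Q=Q$, which means every $\gamma\in\C(Q)$ already acts everywhere on $Q$, i.e.\ $\C(Q)\subseteq\Gamma(Q)$. No pointwise extension of $f$ by density, no splitting into PI and hermitian summands.

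The concrete gap is in your treatment of the PI factor. You apply the preceding lemma to $Q_1=Q_{max}(J_1)$ to get $\C(Q_1)=\Gamma(Q_1)_{\E}$, and then assert $\Gamma(Q_1)_{\E}=\Gamma(Q_1)$. But $\Gamma(Q_1)_{\E}$ is the maximal ring of quotients of the reduced commutative ring $\Gamma(Q_1)$, so this equality says $\Gamma(Q_1)$ is already self-injective (equivalently von Neumann regular). Nothing cited so far gives that; indeed it is essentially the content of the lemma you are trying to prove. The remark following Lemma~\ref{maxclosure} explicitly cautions that $\Gamma_{\E(J)}$ and $\Gamma(J_{\E(\Gamma)})$ may differ, which is precisely the kind of slippage your argument ignores. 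The self-saturation $Q_{max}(Q_1)=Q_1$ is a statement about $Q_1$ as a $\Gamma(Q_1)$-module being rationally complete; it does not directly say the coefficient ring $\Gamma(Q_1)$ is rationally complete. The paper's route through the central closure sidesteps this entirely, because it works with the algebra $\C(Q)Q$ rather than trying to compare two localizations of the scalar ring. If you want to repair your argument, replace the pointwise/decomposition machinery with the single observation that $\C(Q)Q$ is an algebra of quotients of $Q$ and invoke Lemma~\ref{maxclosure} once.
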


\begin{proof} Set $Q = Q_{max}(J)$. Since $\C(Q)Q$ is an algebra of quotients of $Q$,
lemma \ref{maxclosure}
implies that $\C(Q)Q = Q$, hence $\C(Q) \subseteq \Gamma(Q)$.\end{proof}

Note that this result does not strictly parallels the result for associative algebras, for which the centroid of the maximal algebra of quotients and the extended centroid of the algebra coincide. As for the usual central closure for quadratic Jordan algebras, the usual construction may not be centrally closed. However, an inductive transfinite procedure gives rise to a centrally closed algebra (see \cite[4]{mcz}). In our situation we can follow a similar argument to construct a ``closed'' extended centroid $\cal {C}_{\infty}(J)$ (see \cite{pi-ii}). With this notion we propose:

\begin{apartado}{\bf Conjecture:} Define $\C_\infty(J)$ as in \cite[ 4.11]{pi-ii}. Then
$\Gamma(Q_{max}(J)) = \C_\infty(J)$. \end{apartado}

\section{Polynomial identities and algebras of quotients}

In this section we prove that an algebra of quotients of a PI-Jordan algebra $J$ inherits the vanishing
of Jordan polynomials on $J$. This result is essentially contained in \cite{densos}. Recall \cite[3.1.5]{jac-struc} that
a unital quadratic algebra over $\Phi$ is a triple $(J,U,1)$ where $J$ is a $\Phi$-module, $1 \in J$, an $U$ is a quadratic map
$J\rightarrow \End_{\Phi}(J)$ such that $U_1=\rm Id$. More generally, a quadratic algebra (not necessarily unital) is a pair
$(J,U)$ such that there is a quadratic unital algebra $(\widehat{J}, \widehat{U},1)$ such that $J \subseteq \hat{J}$ is a quadratic subalgebra
in the obvious sense.

We refer to \cite{jac-struc} for the definition of the free quadratic $\Phi$-algebra $FQ[X]$ over a set $X$ of generators.
Its elements are called quadratic polynomials and it satisfies the universal property of uniquely extending any mapping
$\alpha: X \rightarrow J$ into a quadratic algebra $J$ to a homomorphism of quadratic algebras $\tilde{\alpha}: FQ[X]\rightarrow J$.
For an element $f(x_1, \dots, x_n)\in FQ[X]$ we write $\tilde{\alpha}(f)=f(\alpha(x_1),\dots,\alpha(x_n))$, which is called the
evaluation of $f$ at $x_i=\alpha(x_i)$.

For us the most important class of quadratic algebras is the variety (in the universal algebras sense) of quadratic Jordan algebras,
for which $U$ is the usual quadratic mapping.

Assume that $\Phi$ is a semiprime commutative associative ring, and let ${\cal E}={\cal E} (\Phi)$ its filter of essential
ideals. Suppose that $Q$ is a quadratic $\Phi$-algebra which is a nonsingular $\Phi$-module. As in \cite{densos},
we can follow the construction of the nearly classical rings of fractions \cite{bmi} (named almost-classical rings of quotients
in \cite{wis,densos}) we give $Q_{\cal E}$ a structure of quadratic algebra. Since $ Q_{\cal E}=\displaystyle \lim_{\underset{\frak a \in \cal E}\longrightarrow} \Hom_{\Phi}(\frak a, Q)$, we can represent an element $q \in Q_{\cal E}$ as an equivalence class $q=[f,\frak a]$ where $\frak a \in \cal E$, $f: \frak a \rightarrow Q$ is a homomorphism, and $[f,\frak a]=[g,\frak b]$ for $\frak b \in \cal E$, $g \in \Hom _{\Phi}(\frak b, Q)$ if the restrictions $f \vert_{\frak a \cap \frak b}$ and $g \vert_{\frak a \cap \frak b}$ coincide (see \cite{st}).

For any $n\ge 1$ and $\frak a \in \cal E$, we have $$\frak a^{[n]}=\sum _{\alpha \in \frak a} \alpha^{n}\Phi \in \cal E,$$
and if $\frak {a}_1,\dots,\frak {a}_k \in \cal E$, $\frak {a}_1\cdots\frak {a}_k \in \cal E$, $Q_{\cal E}$ can be given a structure
of quadratic algebra in the following way: if $q,p \in Q_{\cal E}$, and $q=[f,\frak a]$, $p=[d,\frak b]$, we set $U_{q}P=[h,\frak {a}^{[2]} \frak b]$,
where $h:\frak {a}^{[2]} \frak b \rightarrow Q$ is determined by $h(\alpha ^{2} \beta)=U_{f(\alpha)}g(\beta)$ for $\alpha \in \frak a$,
$\beta \in \frak b$.

Since a unital quadratic algebra not only has the quadratic operator $U$ but also the unit element $1$ which must satisfy $U_1=\rm Id$, and
a quadratic algebra embeds in a unital quadratic algebra, to prove that we have obtained a quadratic algebra structure on $Q_{\cal E}$
we need to embed it in a unital quadratic algebra $\tilde{Q}_{\cal E}$ in the sense that quadratic  map of $Q_{\cal E}$ is the restriction
of the quadratic map of $\tilde{Q}_{\cal E}$. This is easily obtained by considering the unital quadratic $\Phi$-algebra $\tilde{Q}$ in which $Q$ embeds. This algebra has an almost-classical quotient $\tilde{Q}_{\cal E}$ which is easily seen to contain a copy of $Q_{\cal E}$ as a subalgebra, and which is unital (its unit element being the class $[1,\Phi]$ where $1:\Phi \rightarrow \tilde {Q}$ is the constant map $1(\alpha)=1$
for all $\alpha \in \Phi$, where $1 \in \tilde{Q}$ is the unit of $\tilde{Q}$).

Our aim now is to prove that for a quadratic algebra $Q$ over $\Phi$ which is a nonsingular $\Phi$-module, the identities of $Q$ transfer
to its almost-classical algebra of quotients.

More precisely we have:

\begin{teorema} For any quadratic algebra $Q$ over a semiprime ring $\Phi$ as before, which is a nonsingular $\Phi$-module,
any quadratic homogeneous polynomial $F(x_1,\dots,x_n) \in FQ[X]$ which vanishes on $Q$ also vanishes on $Q_{\cal E}$.
\end{teorema}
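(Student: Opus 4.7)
Fix $q_1,\dots,q_n\in Q_{\E}$ and write $q_i=[f_i,\mathfrak a_i]$ with $\mathfrak a_i\in \E$ and $f_i\in \Hom_\Phi(\mathfrak a_i,Q)$. Reading ``homogeneous'' as homogeneous in each variable separately, let $(d_1,\dots,d_n)$ be the multidegree of $F$ and set $\mathfrak b=\mathfrak a_1^{[d_1]}\cdots\mathfrak a_n^{[d_n]}$, which lies in $\E$ by the two properties of the filter recalled just before the theorem. The plan is to produce an explicit representative $(h_F,\mathfrak b)$ of $F(q_1,\dots,q_n)$ with
\[
h_F(\alpha_1^{d_1}\cdots\alpha_n^{d_n})=F(f_1(\alpha_1),\dots,f_n(\alpha_n))\quad\text{for all }\alpha_i\in\mathfrak a_i,
\]
and then exploit that the right-hand side vanishes because $F$ is an identity on $Q$, together with the nonsingularity of $Q$ as a $\Phi$-module.

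I would build $h_F$ by induction on the construction of $F$ as an element of $FQ[X]$ from variables via scalar multiplication, sum, and the quadratic operation $U$. The base case $F=x_i$ is the tautology $q_i=[f_i,\mathfrak a_i]$. Sums and scalar multiples of monomials of the same multidegree share the underlying ideal $\mathfrak b$ after passing to a common refinement (still essential by closure of $\E$ under finite intersections and products), and the corresponding representatives add. The principal step is $F=U_GH$: if $G$ and $H$ have multidegrees $(e_i)$ and $(c_i)$, so that $F$ has multidegree $(2e_i+c_i)$, then applying the definition of $U$ on $Q_{\E}$ to the representatives $G(q)=[f_G,\mathfrak b_G]$ and $H(q)=[f_H,\mathfrak b_H]$ produced by the inductive hypothesis yields, at $\gamma=\alpha_1^{e_1}\cdots\alpha_n^{e_n}\in\mathfrak b_G$ and $\beta=\alpha_1^{c_1}\cdots\alpha_n^{c_n}\in\mathfrak b_H$, the equality $h_F(\gamma^2\beta)=U_{f_G(\gamma)}f_H(\beta)=U_{G(f_i(\alpha_i))}H(f_i(\alpha_i))$ with $\gamma^2\beta=\alpha_1^{2e_1+c_1}\cdots\alpha_n^{2e_n+c_n}$, as desired.

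Once the claim is established, the theorem drops out: the products $\alpha_1^{d_1}\cdots\alpha_n^{d_n}$ generate $\mathfrak b$ as a $\Phi$-module, so the $\Phi$-linearity of $h_F$ combined with $F(f_1(\alpha_1),\dots,f_n(\alpha_n))=0$ forces $h_F\equiv 0$ on $\mathfrak b$, whence $F(q_1,\dots,q_n)=[0,\mathfrak b]=0$ in $Q_{\E}$. The main obstacle I anticipate is precisely the $U$-step: one must carefully match the iterative formula for the quadratic product in $Q_{\E}$ with the evaluation of a composite quadratic polynomial on the ``diagonal'' generators $\alpha_1^{d_1}\cdots\alpha_n^{d_n}$, keeping track of multidegrees through containments such as $(\mathfrak a^{[e]})^{[2]}\supseteq\mathfrak a^{[2e]}$ that make $\mathfrak b_G$, $\mathfrak b_H$ and $\mathfrak b$ fit together correctly. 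A minor additional point is the interpretation of ``homogeneous''; if one only assumes total homogeneity of degree $d$, one reduces to the multihomogeneous case either by decomposing $F$ into multihomogeneous components or by working with $\mathfrak a_1^{[d]}\cdots\mathfrak a_n^{[d]}$ throughout.
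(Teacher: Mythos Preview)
Your proposal is correct and follows essentially the same approach as the paper: choose representatives $(f_i,\mathfrak a_i)$ for the $q_i$, use the multidegree $(k_1,\dots,k_n)$ of $F$ to identify the denominator ideal $\mathfrak a_1^{[k_1]}\cdots\mathfrak a_n^{[k_n]}$, and observe that the representative of $F(q_1,\dots,q_n)$ satisfies $h_F(\alpha_1^{k_1}\cdots\alpha_n^{k_n})=F(f_1(\alpha_1),\dots,f_n(\alpha_n))=0$. The paper simply asserts this key identity in one line, whereas you supply the inductive justification through the $U$-step (and correctly flag the containments $(\mathfrak a^{[e]})^{[2]}\supseteq\mathfrak a^{[2e]}$ needed to match the ideals); the paper also takes ``homogeneous'' to mean multihomogeneous from the outset, so your closing remark on reducing total homogeneity to the multihomogeneous case is extra caution rather than a divergence.
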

\begin{proof} Since $F$ is homogeneous by hypothesis, all its monomials have the same degree $[k_1,\dots,k_n]$, where $k_i$
is the degree in $x_i$ (this is defined inductively for the generators of the free $Q$-word algebra over $X$ \cite[3.1.5]{jac-struc}).

 Now, if $q_1,\dots,q_n \in Q_{\cal E}$ we choose a representative $(f_i,\frak {a}_i) \in q_i$ for any $i$. Set $p=F(q_1,\dots,q_n)$, then for any
$\alpha_i\in \frak{a}_i$ we have $F(f_1,\dots,f_n)(\alpha_{1}^{k_1}\cdots \alpha_{n}^{k_n})=F(f_{1}(\alpha_1),\dots,f_{n}(\alpha_n))=0$,
hence $p=[F(f_1,\dots,f_n),\frak {a}_{1}^{[k_1]}\cdots \frak {a}_{n}^{[k_n]}]=0$, and therefore $p=F(q_1,\dots,q_n)=0$ thus $F$ vanishes on $Q_{\cal E}$.

\end{proof}

As a particular case of this result we turn our attention to the variety of algebras we are interested in, namely, quadratic Jordan algebras.

As mentioned before (\ref{pre}(5)), if a quadratic Jordan algebra $J$ is nondegenerate and PI, its centroid $\Gamma=\Gamma(J)$
is a reduced commutative algebra, and $J$ is a nonsingular $\Gamma$-module. The maximal algebra of quotients os $J$ is in this case
the almost-classical localization $J_{{\cal E} (J)}$, and therefore the previous result apply to it, thus yielding

\begin{teorema} If $J$ is a nondegenerate PI-algebra, then its maximal algebra of quotients $Q_{max} (J)$ is PI. Moreover, each homogeneous
quadratic polynomial vanishing on $J$ also vanishes on $Q_{max}(J)$.\end{teorema}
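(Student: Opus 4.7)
The plan is to obtain this as a direct corollary of the preceding theorem. First, I would invoke the fact recalled in \ref{pre}(5) (and referenced again immediately before the statement) that a nondegenerate PI quadratic Jordan algebra $J$ has centroid $\Gamma=\Gamma(J)$ which is a reduced (in particular semiprime) commutative associative ring, and that $J$ is a nonsingular $\Gamma$-module. This places the pair $(\Phi,Q)=(\Gamma,J)$ exactly within the hypotheses of the preceding theorem.

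Next, I would appeal to the structural description of $Q_{max}(J)$ gathered in \ref{ampleher}: under the PI hypothesis, the Zelmanov-type ideal satisfies $\mathcal{Z}(J)=0$, which collapses the subdirect decomposition to the PI summand $J_1=J$, and yields the identification $Q_{max}(J) = J_{\E(\Gamma(J))}$ as the almost-classical localization at the filter $\E(\Gamma(J))$ of essential ideals of the centroid. Once this identification is made, the second assertion of the theorem is nothing but the preceding theorem transcribed to $Q=J$ and $\Phi=\Gamma(J)$: any homogeneous quadratic polynomial vanishing on $J$ vanishes on $J_{\E(\Gamma(J))}=Q_{max}(J)$.

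For the first assertion, I would reduce the PI property of $Q_{max}(J)$ to the second assertion. Since $J$ is PI, by definition there exists an essential Jordan polynomial $f$ strictly vanishing on $J$, and as noted in \ref{pre}(5) we may apply multilinearization simultaneously to $f$ and to its image in $FSJ[X]$, preserving the monic leading term, to produce a \emph{multilinear} essential polynomial $g$ strictly vanishing on $J$. Being multilinear, $g$ is homogeneous in each variable, so the second assertion applies and gives $g=0$ on $Q_{max}(J)$; multilinearity promotes this identity to a strict identity, and since $g$ remains essential, $Q_{max}(J)$ is PI.

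The only non-routine step is the identification $Q_{max}(J)=J_{\E(\Gamma(J))}$, which is imported wholesale from \ref{ampleher}; once granted, the argument is pure bookkeeping combining the preceding theorem with the multilinearization remark from \ref{pre}(5), and I do not anticipate any genuine technical obstacle.
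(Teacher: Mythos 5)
Your argument is correct and follows essentially the same route as the paper: identify $Q_{max}(J)$ with the almost-classical localization $J_{\E(\Gamma(J))}$ over the reduced centroid (so that the preceding theorem on transfer of homogeneous quadratic identities applies directly), and handle the PI assertion by multilinearizing to a homogeneous, in fact multilinear, essential identity as recalled in \ref{pre}(5). The paper's own proof is just a compressed version of exactly this bookkeeping.
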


\begin{proof} This is immediate from the previous discussion taking into account that a Jordan algebra is PI if and only if
it satisfies a multilinear polynomial identity hence a homogeneous polynomial (\ref{pre}(5)).\end{proof}

\section{Orthogonal completeness}

We now introduce the technical tools developed in \cite{bmi} (see also \cite{bmami}) that allow the reduction of our problem to the factors of a subdirect decomposition, to which we will devote the next section, and that inherit the satisfaction of the logical sentences called Horn formulas.

We follow the notation of \cite{bmami}, and in particular we write $ \sum_{\epsilon\in
D}^{\perp}x_{\epsilon} \epsilon$ for the unique element such that $x \epsilon=x_{\epsilon}$ for all $\epsilon \in D$ where $D$ is a dense subset of the set $B=B({\cal{C}}(J))$ of idempotents of ${\cal{C}}(J)$ (see \cite[3.1.5]{bmami}).

\begin{lema}\label{ocquot}Let $J$ be a nondegenerate PI Jordan algebra. Then $Q_{max}(J)$ is
orthogonaly complete over $\C(J)$ (and over $\Gamma(Q_{max}(J))$).\end{lema}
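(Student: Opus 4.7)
The plan is to exploit the explicit description $Q_{max}(J) = J_{\E(J)}$ that is available for nondegenerate PI Jordan algebras, together with the identification $\C(J) = \Gamma_{\E(J)}$ established earlier in the paper, so as to reduce orthogonal completeness of $Q_{max}(J)$ to the standard orthogonal completeness phenomenon for modules of quotients of a nonsingular module over a semiprime commutative ring. Concretely, the elements of $Q_{max}(J)$ are equivalence classes $[f,\mathfrak{a}]$ with $\mathfrak{a}\in\E(\Gamma)$ and $f\in\Hom_\Gamma(\mathfrak{a},J)$, and the elements of $\C(J)$ are classes $[\pi,\mathfrak{e}]$ of similar shape with values in $\Gamma$; orthogonal sums will be built by gluing such representatives along a compatible common essential ideal.

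First I would fix a dense orthogonal family $D\subseteq B(\C(J))$ and elements $\{x_\epsilon\}_{\epsilon\in D}\subseteq Q_{max}(J)$, choose representatives $\epsilon=[\pi_\epsilon,\mathfrak{e}_\epsilon]$ and $x_\epsilon=[f_\epsilon,\mathfrak{a}_\epsilon]$, and restrict to an essential subideal $\mathfrak{d}_\epsilon\subseteq\mathfrak{a}_\epsilon\cap\mathfrak{e}_\epsilon$ on which the orthogonality relation $\epsilon\epsilon'=0$ (for $\epsilon\neq\epsilon'$) is witnessed inside $\Gamma$ by $\pi_\epsilon(\mathfrak{d}_\epsilon)\cdot\pi_{\epsilon'}(\mathfrak{d}_{\epsilon'})=0$; this is the key point where the essentiality of $\Gamma$ inside $\Gamma_{\E(J)}=\C(J)$ is used, since any identity in $\C(J)$ can be pulled back to $\Gamma$ by intersecting with an essential ideal. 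Next I would set
\[
\mathfrak{I}=\bigoplus_{\epsilon\in D}\pi_\epsilon(\mathfrak{d}_\epsilon)\triangleleft\Gamma,
\]
which is essential in $\Gamma$ because $D$ is dense in $B(\C(J))$ and each $\pi_\epsilon(\mathfrak{d}_\epsilon)$ is essential in the corresponding direct summand of $\C(J)$. Define $f\colon\mathfrak{I}\to J$ by $f(\pi_\epsilon(\alpha))=f_\epsilon(\alpha)$ for $\alpha\in\mathfrak{d}_\epsilon$ and extend $\Gamma$-linearly; pairwise orthogonality of the $\pi_\epsilon(\mathfrak{d}_\epsilon)$ guarantees that this assignment has no overlap, so $f\in\Hom_\Gamma(\mathfrak{I},J)$ is well defined. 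The class $x=[f,\mathfrak{I}]\in Q_{max}(J)$ then satisfies $x\epsilon=x_\epsilon\epsilon$ for every $\epsilon\in D$ by a direct check on representatives, exhibiting $x=\sum_{\epsilon\in D}^{\perp}x_\epsilon\epsilon$ in the sense of \cite{bmami}; uniqueness is immediate from the density of $D$ together with the nonsingularity of the $\C(J)$-action on $Q_{max}(J)$.

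The main obstacle I expect is the translation step: verifying that orthogonality of idempotents in $\C(J)=\Gamma_{\E(J)}$ can be refined, on a suitable essential subideal, to genuine orthogonality in $\Gamma$, and that the resulting direct sum $\mathfrak{I}$ is essential \emph{in $\Gamma$} rather than merely in $\C(J)$. Both points rest on the fact that the inclusion $\Gamma\hookrightarrow\Gamma_{\E(J)}$ is essential and that $\Gamma_{\E(J)}$ is a module of quotients, so essential ideals and orthogonality relations restrict and extend faithfully. Finally, orthogonal completeness of $Q_{max}(J)$ over $\Gamma(Q_{max}(J))$ follows by rerunning the argument with $J$ replaced by $Q_{max}(J)$, invoking Lemma \ref{maxclosure} ($Q_{max}(Q_{max}(J))=Q_{max}(J)$) and Lemma \ref{centr} ($\Gamma(Q_{max}(J))=\C(Q_{max}(J))$) to provide the analogous identification that powers the construction.
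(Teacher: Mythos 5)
The framing you choose differs from the paper's: the paper realizes the putative orthogonal sum as a class $[f,\mathfrak a]\in Q_{max}(J_\E)=J_\E$ with $\mathfrak a\triangleleft\Gamma(J_\E)$ and $f\in\Hom_{\Gamma(J_\E)}(\mathfrak a,J_\E)$, leaning on Lemma~\ref{maxclosure}, whereas you try to build it directly as $[f,\mathfrak I]$ with $\mathfrak I\triangleleft\Gamma(J)$ and $f\in\Hom_{\Gamma(J)}(\mathfrak I,J)$. That is a legitimate alternative route, but the explicit $f$ you define does not produce the right element.

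Take $\alpha\in\mathfrak d_\epsilon$ and set $\delta=\pi_\epsilon(\alpha)=\epsilon\alpha\in\mathfrak I$. With your definition $f(\pi_\epsilon(\alpha))=f_\epsilon(\alpha)$ one gets
$\delta\cdot(\epsilon x)=(\delta\epsilon)x=\pi_\epsilon(\alpha)x=f(\pi_\epsilon(\alpha))=f_\epsilon(\alpha),$
while
$\delta\cdot(\epsilon x_\epsilon)=(\delta\epsilon)x_\epsilon=(\epsilon\alpha)x_\epsilon=\epsilon\,(\alpha x_\epsilon)=\epsilon f_\epsilon(\alpha).$
Thus $\delta(\epsilon x)$ and $\delta(\epsilon x_\epsilon)$ differ by the factor $(1-\epsilon)$ applied to $f_\epsilon(\alpha)$, which is nonzero unless $\epsilon x_\epsilon=x_\epsilon$ already holds. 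Since the $x_\epsilon$ are arbitrary, $\epsilon x\ne\epsilon x_\epsilon$ in general, so your $x$ is not the orthogonal sum. The obstruction is precisely that the correct value, $\epsilon f_\epsilon(\alpha)$, belongs to $Q_{max}(J)$ but need not lie in $J$, so it cannot be fed into a homomorphism with target $J$ without some maneuvering; this is exactly what the paper sidesteps by allowing the homomorphism to take values in $J_\E$ and then invoking $Q_{max}(J_\E)=J_\E$. Your construction can be repaired within your framing, e.g.\ by shrinking to the essential ideals $\mathfrak d_\epsilon^{[2]}$ and setting $f(\pi_\epsilon(\alpha^2))=\pi_\epsilon(\alpha)f_\epsilon(\alpha)\in\Gamma J\subseteq J$, which does reproduce $\epsilon f_\epsilon(\alpha^2)$ after applying $\delta$. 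Separately, note that the paper's proof devotes its first half to showing that a dense subset of $B(\C(J))$ remains dense in $\Gamma(J_\E)$, a step that uses tightness of $J_\E$ over $J$ and the nonvanishing of the weak center in nonzero ideals of a nondegenerate PI algebra; in your version this Jordan-specific input is hidden inside the claim that $\mathfrak I$ is essential in $\Gamma$, which you assert but do not actually prove beyond citing density of $D$ in $B(\C(J))$.
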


\begin{proof} Consider the boolean algebra $B = B(\C(J))$ of all idempotents of $\C(J)$. Let $D
\subseteq B$ be a dense set of orthogonal idempotents. Put $I= \sum_{\epsilon\in
D}\epsilon J_\E$. Then $I$ is a nonzero ideal of $J_\E$ and if $0 \ne \gamma \in
\Gamma(J_\E)$ has $\gamma I = 0$, then $\gamma J_\E \subseteq {\rm Ann}_{J_\E}(I)$. Since
$J_\E$ is tight over $J$, we have $L = {\rm Ann}_{J_\E}(I)\cap J \ne 0$, and since $J$ is
PI, there is a nonzero $z \in C_w(J)\cap L$. Thus $0 \ne \delta = U_z\in \Gamma \subseteq
\C(J)$ has $\delta I = 0$, hence $\delta \epsilon = 0$ for all $\epsilon \in D$, hence
$\delta = 0$ by the density of $D$. This contradiction implies that $\gamma I \ne 0$ for
all $\gamma \in \Gamma(J_\E)$, hence $D$ is dense in $\Gamma(J_\E)$.

Take now an element $x_\epsilon \in Q_{max}(J) = J_\E$ for every $\epsilon \in D$, and
consider the ideal $\frak a = \sum_{\epsilon \in U}\Gamma(J_\E)\epsilon$ of $\Gamma(J_\E)$.
Define $f: \frak a \rightarrow J_\E$ by $f(\sum_{\epsilon \in D}\alpha_\epsilon\epsilon) =
\sum_{\epsilon \in D}\alpha_\epsilon\epsilon x_\epsilon$. Then $f$ is a homomorphism of
$\Gamma(J_\E)$-modules, and it defines an element $x =[f,\frak a] \in Q_{max}(J_\E) = J_\E$
which satisfies $\epsilon x = f(\epsilon) = \epsilon x_\epsilon$ for all $\epsilon \in D$.
Therefore $x = \sum^{\perp}_{\epsilon \in D}\epsilon x_\epsilon$, and this proves that
$J_\E$ is orthogonally closed.\end{proof}

Let $J$ be a nondegenerate Jordan PI-algebra, and let $X \subseteq Q_{max}(J)$ be a
subset. We denote by $O(X)$ the orthogonal complection of $X$. (Note that since
$Q_{max}(J)$ is orthogonally complete by \ref{ocquot}, $O(N)$ is the
intersection of all orthogonally complete subsets of $Q_{max}(J)$ that contain
$N$.)

\begin{lema}\label{oocc}Let $J$ be a nondegenerate Jordan PI-algebra, denote $\C = \Gamma(J)$,
and let $N \subseteq Q_{max}(J)$ be a $\C$-submodule of $Q_{max}(J)$. Then $O(N)$ is the
set $N_{{\cal E}(\C)}$ of all $[f,\frak a]\in J_\E = Q_{max}(J)= Q_{max}(J)_{{\cal E}(\C)}$
(with $\frak a \in {\cal E}(\C)$) such that $f(\alpha)\in N$ for all
$\alpha \in \frak a$, and in particular, $O(\C J) = J_\E = Q_{max}(J)$.\end{lema}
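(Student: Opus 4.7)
The plan is to prove the two inclusions $O(N)\subseteq N_{\E(\C)}$ and $N_{\E(\C)}\subseteq O(N)$ separately, and then to deduce the ``in particular'' clause as an immediate corollary. Both inclusions rely on the fact, established in Lemma \ref{ocquot} together with the discussion of \ref{ampleher}, that $Q_{max}(J)=J_{\E}=Q_{max}(J)_{\E(\C)}$ is itself the $\E(\C)$-module of quotients of $Q_{max}(J)$, so its elements can be presented as classes $[f,\mathfrak a]$ with $f:\mathfrak a\to Q_{max}(J)$ whenever convenient.

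For $O(N)\subseteq N_{\E(\C)}$, I would first observe that $N\subseteq N_{\E(\C)}$: any $y\in N$ defines a $\C$-module homomorphism $f_y:\C\to Q_{max}(J)$ by $f_y(\alpha)=\alpha y$, whose image lies in $N$ because $N$ is a $\C$-submodule, so $y=[f_y,\C]\in N_{\E(\C)}$. It then suffices to show that $N_{\E(\C)}$ is orthogonally complete, since $O(N)$ is by definition the smallest orthogonally complete subset of $Q_{max}(J)$ containing $N$. Given a dense set $D$ of orthogonal idempotents in $B=B(\C(J))$ and representatives $x_{\epsilon}=[g_{\epsilon},\mathfrak b_{\epsilon}]\in N_{\E(\C)}$ (so $g_{\epsilon}(\beta)\in N$ for all $\beta$), I would mimic the patching construction used in Lemma \ref{ocquot}, forming an essential ideal $\mathfrak a\subseteq\C$ (say $\mathfrak a=\sum_{\epsilon\in D}\epsilon\mathfrak b_{\epsilon}$, whose essentiality follows from the density of $D$ together with the essentiality of each $\mathfrak b_{\epsilon}$) and a homomorphism $f:\mathfrak a\to Q_{max}(J)$ determined by $f(\epsilon\beta)=\epsilon g_{\epsilon}(\beta)$ for $\beta\in\mathfrak b_{\epsilon}$; the $\C$-stability of $N$ ensures $f(\mathfrak a)\subseteq N$, so the resulting class, which equals $\sum^{\perp}_{\epsilon}x_{\epsilon}\epsilon$, lies in $N_{\E(\C)}$.

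For the reverse inclusion $N_{\E(\C)}\subseteq O(N)$, the key observation is that if $[f,\mathfrak a]\in N_{\E(\C)}$ then $\alpha\cdot[f,\mathfrak a]=f(\alpha)\in N$ for every $\alpha\in\mathfrak a$. I would therefore try to produce a dense family $D\subseteq B$ of orthogonal idempotents such that $\epsilon\cdot[f,\mathfrak a]\in N$ for every $\epsilon\in D$; once available, the orthogonal sum $[f,\mathfrak a]=\sum^{\perp}_{\epsilon\in D}(\epsilon\cdot[f,\mathfrak a])\epsilon$ would display $[f,\mathfrak a]$ as an orthogonal sum of elements of $N$, placing it in $O(N)$. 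The set $S=\{\epsilon\in B\mid \epsilon\cdot[f,\mathfrak a]\in N\}$ is closed under sub-idempotents and orthogonal sums (again by $\C$-stability of $N$), so Zorn's lemma delivers a maximal orthogonal family $D\subseteq S$. The main obstacle is proving that $D$ is dense in $B$: here I would use that, by Lemma \ref{centr} and the PI machinery, $\C(J)=\Gamma(Q_{max}(J))$ is a regular Baer ring whose essential ideals contain a dense set of orthogonal idempotents, so any idempotent $\epsilon_0\in B$ orthogonal to every element of $D$ would, via a refinement supported inside $\epsilon_0\mathfrak a$, contribute a further element of $S$, contradicting the maximality of $D$.

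Finally, the ``in particular'' clause $O(\C J)=Q_{max}(J)$ is an immediate consequence of the main equality: every $[f,\mathfrak a]\in J_{\E}=Q_{max}(J)$ admits a representative with $f:\mathfrak a\to J$, and then $f(\alpha)\in J\subseteq\C J$ for all $\alpha\in\mathfrak a$, so $[f,\mathfrak a]\in(\C J)_{\E(\C)}=O(\C J)$. Thus the only real work is contained in the two inclusions above, with the density of the extracted family of idempotents being the only genuinely delicate point.
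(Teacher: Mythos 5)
Your argument is correct and, for the inclusion $O(N)\subseteq N_{\E(\C)}$, follows the same route the paper takes: you observe $N\subseteq N_{\E(\C)}$ and then verify that $N_{\E(\C)}$ is orthogonally complete by the patching construction of Lemma \ref{ocquot}, which is exactly what the paper means by ``as in \ref{ocquot}.'' The paper's second paragraph (``On the other hand$\ldots$'') is a more explicit re-derivation of the \emph{same} inclusion: it takes an element in the explicit description of $O(N)$ from \cite[3.1.14]{bmami} and produces the representative $[f_x,\mathfrak a]$ showing it lies in $N_{\E(\C)}$. The reverse inclusion $N_{\E(\C)}\subseteq O(N)$ is not spelled out in the paper at all, so the main value of your write-up is precisely that you treat it carefully.

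For that reverse inclusion your argument is right but can be streamlined. You set up $S=\{\epsilon\in B\mid \epsilon[f,\mathfrak a]\in N\}$, take a maximal orthogonal family in $S$ by Zorn's lemma, and then argue density by a refinement argument. This works, but the Zorn step is unnecessary: since $\C=\Gamma(Q_{max}(J))$ is a commutative von Neumann regular self-injective ring, every essential ideal already contains a dense orthogonal family of idempotents, so one may simply choose $D\subseteq\mathfrak a$ dense orthogonal directly. Then each $\epsilon\in D$ satisfies $\epsilon[f,\mathfrak a]=f(\epsilon)\in N$ automatically, and $[f,\mathfrak a]=\sum^{\perp}_{\epsilon\in D}(\epsilon[f,\mathfrak a])\epsilon$ follows from $\epsilon^2=\epsilon$ together with the density of $D$, as you say. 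This makes explicit the ``$\alpha\cdot[f,\mathfrak a]=f(\alpha)\in N$'' observation you isolate as the key point, without any maximality argument. Your handling of the ``in particular'' clause $O(\C J)=Q_{max}(J)$ is correct and matches the only reading under which the paper's statement makes sense, namely that every $[f,\mathfrak a]\in J_{\E}$ has $f(\mathfrak a)\subseteq J\subseteq\C J$.
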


\begin{proof} We have $O(N) = \{\sum^\perp_{\epsilon \in D}\epsilon x_\epsilon\mid D$ is a dense
orthogonal subset of $B= B(\C)$, and $\{x_\epsilon \mid \epsilon \in D\} \subseteq N\}$ by
\cite[3.1.14]{bmami}. As in \ref{ocquot} it follows that the set $N_{{\cal E}(\C)}$ is
orthogonally complete, hence it contains
$O(N)$. On the other hand, if $x = \sum^\perp_{\epsilon \in D}\epsilon x_\epsilon$ for some
dense orthogonal subset $D \subseteq B$, and $\{x_\epsilon \mid \epsilon \in D\} \subseteq
N\}$, then the ideal $\frak a = \C D = \sum_{\epsilon \in D}\C(J)\epsilon$ is
essential. Moreover, if $\alpha \in \frak a$, then $\alpha = \sum_{\epsilon \in
D}\alpha\epsilon$, where there is only a finite number of nonzero products $\alpha\epsilon$.
Then $\alpha x = \sum_{\epsilon \in D}\alpha\epsilon x = \sum_{\epsilon \in D}\alpha\epsilon
x_\epsilon$ has only a finite number of nonzero summands, and all of them lie in $N$ since
$x_\epsilon \in N$ for all $\epsilon \in D$. Thus $x$ is represented in $Q_{max}(J) =
Q_{max}(J)_{{\cal E}(\C)}
$ by the mapping $f_x: \frak a\rightarrow Q_{max}(J)$ given by $f_x(\alpha)=
\alpha x$.\end{proof}

Recall from \cite[3.1.1]{bmami} that if $M$ is a nonsingular $\C$-module for a von Neumann regular
selfinjective  commutative ring $\C$, and $T\subseteq N$, there exists a unique idempotent
$\epsilon_T \in \C$ such that ${\rm Ann}_\C(T) = \C (1-\epsilon_T)$, and that
$\epsilon_{\delta T} = \delta\epsilon_T$ for any $\delta \in B$. In particular this applies
to $Q_{max}(J)$ for a nondegenerate Jordan PI-algebra $J$ and $\C = \Gamma(Q_{max}(J)) =
\C(Q_{max}(J))$, (\ref{centr}).

\begin{lema}\label{annid}Let $J$ be a nondegenerate Jordan PI-algebra, denote $Q =
Q_{max}(J)$, and $\C = \Gamma(Q)$. Let $X\subseteq Q$, and $I$ be the ideal
$id_{Q(J)}(X)$ generated by $X$. Then ${\rm Ann}_\C(X) ={\rm Ann}_\C(I) =
(1-\epsilon_X)\C$, and ${\rm Ann}_Q(I)= (1-\epsilon_X)Q$.\end{lema}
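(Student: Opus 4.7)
The plan is to exploit the idempotent decomposition $Q=\epsilon_X Q\oplus(1-\epsilon_X)Q$ induced by $\epsilon_X\in\C=\Gamma(Q)$ (Lemma~\ref{centr}): I will show that $I$ lies in the first summand as an \emph{essential} ideal there, which then forces $\text{Ann}_Q(I)$ into the second summand.

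For the centroid equalities, $\text{Ann}_\C(X)=(1-\epsilon_X)\C$ is exactly the defining property of $\epsilon_X$ recalled just before the statement. Since $1-\epsilon_X$ annihilates $X$, each $x\in X$ satisfies $x=\epsilon_X x$, so $X\subseteq\epsilon_X Q$; because $\epsilon_X$ is idempotent and acts via the centroid, $\epsilon_X Q$ is a direct-summand ideal of $Q$, whence $I=\mathrm{id}_Q(X)\subseteq\epsilon_X Q$. One inclusion in $\text{Ann}_\C(X)=\text{Ann}_\C(I)$ is trivial; the other uses $(1-\epsilon_X)\epsilon_X=0$, which forces $(1-\epsilon_X)\C$ to kill $\epsilon_X Q\supseteq I$. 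By the same orthogonality, every product occurring in the annihilator conditions of \ref{annihilator_def} for $y\in(1-\epsilon_X)Q$ and $z\in I\subseteq\epsilon_X Q$ carries a factor $(1-\epsilon_X)\epsilon_X=0$, giving $(1-\epsilon_X)Q\subseteq\text{Ann}_Q(I)$.

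The heart of the proof is the reverse inclusion $\text{Ann}_Q(I)\subseteq(1-\epsilon_X)Q$. Given $y\in\text{Ann}_Q(I)$, nondegeneracy of $Q$ reduces the condition to $U_y I=0$ via \ref{annihilator_def}; decomposing $y=y_1+y_2$ along $\epsilon_X Q\oplus(1-\epsilon_X)Q$, the orthogonality again isolates $U_{y_1}I=0$, so it suffices to show that $y_1\in\epsilon_X Q$ with $U_{y_1}I=0$ forces $y_1=0$. I would prove this by establishing that $I$ is essential as an ideal of $\epsilon_X Q$, since essential ideals of nondegenerate Jordan algebras have trivial annihilator: if $0\ne w\in I\cap\text{Ann}_{\epsilon_X Q}(I)$ then $U_w\widehat{\epsilon_X Q}\subseteq I$ and $U_wI=0$ yield $U_w^2\widehat{\epsilon_X Q}=0$, so the fundamental identity $U_{U_w q}=U_w U_q U_w$ gives $U_{U_w q}\widehat{\epsilon_X Q}=0$ and nondegeneracy forces $w=0$.

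For the essentiality step, suppose $K$ is a nonzero ideal of $\epsilon_X Q$ with $K\cap I=0$. Since $Q=Q_{max}(J)$ is PI by the Theorem of Section~5, so is its direct summand $\epsilon_X Q$, and Fulgham's theorem supplies a nonzero $z\in K$ with $U_z\in\Gamma(\epsilon_X Q)=\epsilon_X\C$ (the identification coming from the direct-product decomposition $Q=\epsilon_X Q\oplus(1-\epsilon_X)Q$). Because $z\in K$ and $I$ is an ideal, $U_z X\subseteq U_z I\subseteq K\cap I=0$, putting $U_z\in\text{Ann}_\C(X)\cap\epsilon_X\C=(1-\epsilon_X)\C\cap\epsilon_X\C=0$ and forcing $z=0$ by nondegeneracy of $\epsilon_X Q$, a contradiction. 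This essentiality step is the main obstacle: it is the only place where the PI hypothesis enters substantively (through Fulgham's theorem), and its leverage comes from the orthogonality of $\epsilon_X\C$ and $(1-\epsilon_X)\C$ inside $\C$.
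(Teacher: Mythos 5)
Your proof is correct, but it takes a genuinely different route from the paper for the key equality $\mathrm{Ann}_Q(I)=(1-\epsilon_X)Q$. The paper works with the ideal $L=I+\mathrm{Ann}_Q(I)$, which is essential in the nondegenerate algebra $Q$ by a one-line argument (two ideals with zero intersection mutually annihilate), and uses the identification $\C=\C(Q)$ of Lemma~\ref{centr} to read the projection $f\colon L\to Q$, $f(y+z)=y$, as an idempotent $\epsilon\in\C$ with $(1-\epsilon)Q=\mathrm{Ann}_Q(I)$; it then deduces $\epsilon=\epsilon_X$ from the equalities of annihilators. That argument invokes nothing beyond nondegeneracy at this stage. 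You instead split $Q=\epsilon_X Q\oplus(1-\epsilon_X)Q$, reduce via orthogonality to showing $\mathrm{Ann}_{\epsilon_X Q}(I)=0$, and establish that $I$ is essential in $\epsilon_X Q$ by producing, from any hypothetical complementary nonzero ideal $K$, a nonzero weak-central $z\in K$ via Fulgham, which is then annihilated by the clash $(1-\epsilon_X)\C\cap\epsilon_X\C=0$. Your route is sound and the essentiality step genuinely needs the extra input (since essentiality of $I$ in $\epsilon_X Q$ is precisely equivalent to what you want to prove, an independent lever like Fulgham is required to break the circularity); but it commits the PI hypothesis at a point where the paper's argument manages with nondegeneracy alone, since the paper obtains its essential ideal $L$ for free rather than having to argue that $I$ itself is essential in a summand. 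The trade-off is that your proof makes the structural role of $\epsilon_X Q$ and of Fulgham's weak-center theorem more explicit, at the cost of a slightly heavier hypothesis load in this particular lemma.
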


\begin{proof}Since ${\rm Ann}_\C(X) = (1-\epsilon_X)C$, it is clear that $(1-\epsilon_X)C = {\rm
Ann}_\C(I)$. Consider now the ideal $L = I +{\rm Ann}_Q(I)$ and the mapping $f: L \rightarrow Q$
given by $f(y+z) = y$ for $y \in I$, $z\in {\rm Ann}_Q(I)$. Since $L$ is essential this defines an element $\epsilon$ of $\cal C$ with $\epsilon x = f(x)$ for all $x \in L$, and clearly $\epsilon^2 =
\epsilon$. Now we have $(1-\epsilon){\rm Ann}_Q(I) = {\rm Ann}_Q(I)$, hence ${\rm Ann}_Q(I)
\subseteq (1-\epsilon)Q$, and since $\epsilon I = I$, it is clear that $(1-\epsilon)Q
\subseteq {\rm Ann}_Q(I)$, hence $(1-\epsilon)Q = {\rm Ann}_Q(I)$. Moreover
$(1-\epsilon_X)I = 0$ implies $(1-\epsilon_X)\epsilon L= 0$, hence
$(1-\epsilon_X)\epsilon= 0$ by the essentiality of $L$. On the other hand, $1-\epsilon \in
{\rm Ann}_C(I) = (1-\epsilon_X)C$, hence $(1-\epsilon) = (1-\epsilon)(1-\epsilon_X)=
1-\epsilon_X$, and we get $\epsilon=\epsilon_X$.\end{proof}

\begin{lema}\label{epsU}Let $Q$ be as before and $I$ be an ideal of $Q$. Then $a\in {\rm Ann}_Q(I)$
if and only if $\epsilon_a I = 0$.\end{lema}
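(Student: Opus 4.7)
The plan is to apply Lemma \ref{annid} twice --- once with $X = I$ (which, being an ideal, generates itself) and once with $X = \{a\}$. The first application produces an idempotent $\epsilon_I \in \C$ with ${\rm Ann}_\C(I) = (1-\epsilon_I)\C$ and ${\rm Ann}_Q(I) = (1-\epsilon_I)Q$; in particular, the condition $a \in {\rm Ann}_Q(I)$ is equivalent to $\epsilon_I a = 0$. The second application gives ${\rm Ann}_\C(\{a\}) = (1-\epsilon_a)\C$, whence $\epsilon_a a = a$; symmetrically, from $(1-\epsilon_I) \in {\rm Ann}_\C(I)$ one reads off $I = \epsilon_I I$.

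With both idempotents on the table, I would prove the three-way equivalence
$$\epsilon_I a = 0 \ \Longleftrightarrow \ \epsilon_a \epsilon_I = 0 \ \Longleftrightarrow \ \epsilon_a I = 0.$$
The implication $\epsilon_I a = 0 \Rightarrow \epsilon_a \epsilon_I = 0$ uses that then $\epsilon_I \in {\rm Ann}_\C(\{a\}) = (1-\epsilon_a)\C$; the converse $\epsilon_a \epsilon_I = 0 \Rightarrow \epsilon_I a = 0$ follows by multiplying $a = \epsilon_a a$ on the left by $\epsilon_I$. The other equivalence $\epsilon_a \epsilon_I = 0 \Leftrightarrow \epsilon_a I = 0$ is perfectly symmetric, using $I = \epsilon_I I$ and ${\rm Ann}_\C(I) = (1-\epsilon_I)\C$. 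Chaining these equivalences with the observation $a \in {\rm Ann}_Q(I) \Leftrightarrow \epsilon_I a = 0$ from the first paragraph yields the statement.

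I do not expect a serious obstacle: the proof is essentially boolean-algebraic bookkeeping inside $B(\C)$ once Lemma \ref{annid} is invoked. The only conceptual step is to realize that the lemma should be applied not just to the natural choice $X = \{a\}$ (which produces $\epsilon_a$) but also to $X = I$ (which produces $\epsilon_I$), so that the symmetric roles of $a$ and $I$ relative to their idempotents become manifest and the whole question collapses to the orthogonality relation $\epsilon_a \epsilon_I = 0$.
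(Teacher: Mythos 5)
Your proof is correct, and it takes a cleaner, more symmetric route than the paper's. The key difference is that you introduce the second idempotent $\epsilon_I$ (by applying Lemma~\ref{annid} to $X = I$ as well as to $X = \{a\}$), which lets you rephrase both $a \in {\rm Ann}_Q(I)$ and $\epsilon_a I = 0$ as the single boolean condition $\epsilon_a\epsilon_I = 0$, so that the two directions of the equivalence are handled by one mirror-image argument. The paper instead works only with $\epsilon_a$ and treats the two implications asymmetrically: for the forward direction it passes through the containment $I \subseteq {\rm Ann}_Q(H)$ with $H = id_Q(a)$ (an application of the double-annihilator inclusion, since $H \subseteq {\rm Ann}_Q(I)$ forces $I \subseteq {\rm Ann}_Q({\rm Ann}_Q(I)) \subseteq {\rm Ann}_Q(H) = (1-\epsilon_a)Q$), and for the converse it makes a small $U$-operator computation, deducing from $\epsilon_a I = 0$ that $U_{\epsilon_a a} I = 0$, hence $\epsilon_a a \in {\rm Ann}_Q(I)$, and then using $a = \epsilon_a a$. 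What your version buys is that it dispenses with both the double-annihilator step and the $U$-computation, reducing the entire lemma to module-theoretic and boolean bookkeeping inside $\C$; what the paper's version buys is that it never needs to invoke $\epsilon_I$, staying entirely within the data already attached to the element $a$.
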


\begin{proof} Denote by $H$ the ideal of $Q$ generated by $a$. If $a \in {\rm Ann}_Q(I)$, then $I
\subseteq {\rm Ann}_Q(H) = (1-\epsilon_a)Q$ by \ref{annid}, hence $\epsilon_a I = 0$. On the
other hand, if $\epsilon_a I = 0$, then $U_{\epsilon_a a}I = 0$, hence $\epsilon_a a \in
{\rm Ann}_Q(I)$. Since $(1-\epsilon_a)a = 0\in {\rm Ann}_Q(I)$, we get $a \in {\rm
Ann}_Q(I)$.\end{proof}

\section{Peirce stalks}

We assume here that $J$ is a nondegenerate PI-algebra with $J = Q_{max}(J)$. Denote by
$\C$ the centroid $\Gamma(J) = \C(J)$, and set $B = B(\C)$, the boolean algebra of
idempotents of $\C$. Let $\frak m$ be a maximal ideal of the boolean algebra $B$, recall that
the Peirce stalk of $J$ at $\frak m$ is the algebra $J_{\frak m} =J/\frak mJ$, where $\frak m J$ is
the set of elements $\sum_{i=1}^n\epsilon_i x_i$ for $\epsilon_i \in \frak m$ and $x_i\in J$,
which is easily seen to be an ideal of $J$.

We denote by $\phi_{\frak m}:J\rightarrow J_{\frak m}$ the projection $\phi_{\frak m}(x) =
x+\frak mJ$. As in \cite[3.2.4]{bmami}, we have ${\rm Ker}\, \phi_{\frak m} = \{x \in J\mid \epsilon_x\in
\frak m\}$, and for any orthogonally complete subset  $X \subseteq J$ containing $0$, we have
$\phi_{\frak m}(X) = 0$ if and only if $\epsilon_X \in \frak m$.

\begin{lema}\label{spelemnt}Let $J$ be a nondegenerate Jordan algebra, let $a, b\in J$, and denote
the ideals generated by $a$ and $b$ by $A$ and $B$ respectively. If
$U_aU_JU_bJ= 0$, then $A\cap B = 0$.\end{lema}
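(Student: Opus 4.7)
My approach is to prove the stronger statement $U_A U_J U_B J = 0$; once established, the desired $A \cap B = 0$ follows cleanly from the fundamental identity and nondegeneracy, so the bulk of the work lies in propagating the hypothesis from the generators $a,b$ out to the full ideals $A,B$.

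For the reduction, assume $U_A U_J U_B J = 0$ and pick $c \in A \cap B$. Then $U_c U_J U_c J \subseteq U_A U_J U_B J = 0$. For every $y \in J$, the fundamental identity $U_{U_c y} = U_c U_y U_c$ gives $U_{U_c y} J \subseteq U_c U_J U_c J = 0$, so $U_c y$ is an absolute zero divisor of $J$, hence $U_c y = 0$ by nondegeneracy. As this holds for all $y \in J$, we get $U_c J = 0$, so $c$ itself is an absolute zero divisor and $c = 0$, again by nondegeneracy. Thus $A \cap B = 0$.

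The main step is to derive $U_A U_J U_B J = 0$ from $U_a U_J U_b J = 0$, by an inductive propagation along the ideal-generating operations. Typical generators of $A$ (resp.\ $B$) arise from $a$ (resp.\ $b$) by successive $U$-multiplications and triple products $V_{x,y}(\cdot) = \{x,y,\cdot\}$ involving elements of the unital hull $\widehat{J}$. For generators of the shape $\alpha = U_a w \in A$ and $\beta = U_b z \in B$ the fundamental identity gives $U_\alpha U_x U_\beta y = U_a U_w U_a U_x U_b U_z U_b y$; the innermost portion $U_a U_x U_b (U_z U_b y)$ fits the hypothesis with trailing entry $y'' = U_z U_b y \in J$ and so vanishes, and the remaining outer operators carry zero to zero. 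For generators involving $V$-multiplications, or of the ``wrong-side'' shape $U_w a, U_z b$, I would appeal to polarizations of the hypothesis: linearizing the middle slot $x \mapsto x + x'$ in $U_a U_x U_b y = 0$ yields the auxiliary identity $U_a \{x, U_b y, x'\} = 0$, and further polarizations in the other slots, together with polarized forms of $U_{U_x y} = U_x U_y U_x$, handle the remaining generator combinations by the same sandwiching pattern.

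The main obstacle is the bookkeeping for all generator types, in particular those (such as $U_x a$ and $V_{a,y} z$ and their $B$-side counterparts) that do not immediately sit in the ``outer $U_a$, outer $U_b$'' shape of the hypothesis. A clean inductive framework on the depth of generation, combined with the symmetry of the target vanishing $U_A U_J U_B J = 0$ in $A$ and $B$, should make this manageable, but the verification is essentially a systematic expansion exploiting the fundamental identity and its linearizations in every case.
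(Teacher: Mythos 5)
Your reduction step is fine, but note that it also shows the ``stronger'' statement $U_AU_JU_BJ=0$ is actually \emph{equivalent} to $A\cap B=0$ under nondegeneracy (one direction is your argument; the other is $U_AU_JU_BJ\subseteq U_AB\subseteq A\cap B$, using that $A$ is an inner ideal and $B$ an ideal). So you have reformulated the problem, not made progress on it, and all the weight falls on the propagation step that you only sketch.

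That step is where there is a genuine gap. The ideal $A$ is not generated only by elements of the convenient shape $U_a w$; it also contains $\{a,y,z\}=U_{a,z}y$, $U_y a$, $a\circ y$, and iterates of these. Take $\alpha=\{a,y,z\}=U_{a,z}y$. Comparing degree-$(2,2)$ components in the linearization of the fundamental identity $U_{U_{a+z}y}=U_{a+z}U_yU_{a+z}$ gives
$$U_{U_{a,z}y}=U_aU_yU_z+U_{a,z}U_yU_{a,z}+U_zU_yU_a-U_{U_ay,\,U_zy}.$$
Applying this to $U_\alpha U_x U_b w$ produces, among others, the term $U_aU_yU_zU_xU_b w$, which is \emph{not} of the form $U_a U_{(\cdot)} U_b(\cdot)$ and so does not vanish by the hypothesis. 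You cannot linearize the hypothesis in $a$ (there is no second element playing the role of $a$), and the linearizations you do have (e.g.\ $U_a\{x,U_by,x'\}=0$ from polarizing the middle slot) do not reach this term. A similar obstruction arises for $U_ya$ and for the mixed $V$-generators. So the ``systematic expansion'' does not close, at least not by the pattern you describe; the propagation is not mere bookkeeping.

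The paper avoids this entirely. It passes to an arbitrary strongly prime quotient $J/P$, where the hypothesis descends to $U_{\bar a}U_{\bar J}U_{\bar b}\bar J=0$; the \emph{elemental characterization of strong primeness} of Anquela--Cort\'es--Loos--McCrimmon then forces $\bar a=0$ or $\bar b=0$, hence $A\subseteq P$ or $B\subseteq P$, so $A\cap B\subseteq P$. Since the intersection of all strongly prime ideals is the McCrimmon radical, which is zero in a nondegenerate algebra, $A\cap B=0$. The content you would need your propagation to reproduce is essentially the [ACLMc] theorem itself; the paper simply invokes it. If you want to keep your framing, you should either cite that theorem directly (at which point the quotient argument is shorter), or accept that the propagation requires a substantially deeper argument than identity-chasing.
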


\begin{proof} Let $P$ be a strongly prime ideal of $J$ (i.e. $J/P$ is strongly prime). Denoting
with bars the projections modulo $P$, we have $U_{\bar a}U_{\bar J}U_{\bar b}\bar J = 0$,
hence either $\bar a = 0$ or $\bar b=0$ by the elemental characterization of strong
primeness \cite{elemental1}. Thus, either $a \in P$, hence $A\subseteq P$, or $b \in P$, hence $B
\subseteq P$. Therefore $A\cap B \subseteq P$ for any strongly prime ideal $P$. Since the
intersection of those is the McCrimmon radical, which vanishes in the nondegenerate algebra
$J$, we get $A\cap B = 0$.\end{proof}

\begin{lema}\label{elemidemp}Let $J$ be as before, and let $a, b \in J$. If $U_aU_JU_bJ = 0$, then
$\epsilon_a\epsilon_b = 0$.\end{lema}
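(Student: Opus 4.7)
The plan is to combine Lemma \ref{spelemnt} with the identification of annihilator ideals given by Lemma \ref{annid}. Since $U_aU_JU_bJ=0$, Lemma \ref{spelemnt} yields $A\cap B=0$, where $A=\mathrm{id}_Q(a)$ and $B=\mathrm{id}_Q(b)$ are the ideals generated by $a$ and $b$ in $Q=Q_{max}(J)$.

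Next I would observe that $A\cap B=0$ forces $B$ to annihilate $A$ inside $Q$. Indeed, both $A$ and $B$ are in particular inner ideals, so for any $a'\in A$ and $b'\in B$ we have $U_{b'}a'\in B$ (since $b'$ is in the inner ideal $B$ and $a'\in \widehat{Q}$) and simultaneously $U_{b'}a'\in A$ (since $A$ is an ideal and $U_QA\subseteq A$). Thus $U_BA\subseteq A\cap B=0$, and because $Q$ is nondegenerate the characterization in \ref{annihilator_def} gives $B\subseteq\mathrm{Ann}_Q(A)$.

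By Lemma \ref{annid}, $\mathrm{Ann}_Q(A)=(1-\epsilon_a)Q$, so $b\in(1-\epsilon_a)Q$, which means $b=(1-\epsilon_a)b$ and therefore $\epsilon_ab=\epsilon_a(1-\epsilon_a)b=0$. But then $\epsilon_a\in\mathrm{Ann}_{\mathcal{C}}(b)$, and invoking Lemma \ref{annid} once more yields $\mathrm{Ann}_{\mathcal{C}}(b)=(1-\epsilon_b)\mathcal{C}$. Writing $\epsilon_a=(1-\epsilon_b)\gamma$ for some $\gamma\in\mathcal{C}$ and multiplying by $\epsilon_b$ gives $\epsilon_a\epsilon_b=\epsilon_b(1-\epsilon_b)\gamma=0$, as required.

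There is no real obstacle in this argument beyond carefully checking, in the second paragraph, that $U_{b'}a'$ lies simultaneously in $A$ and in $B$; everything else is a direct application of the preceding lemmas. The strategy relies essentially on the fact that in the orthogonally complete, nondegenerate setting the idempotents $\epsilon_x$ encode annihilator behaviour, so that the ideal-theoretic consequence $A\cap B=0$ from Lemma \ref{spelemnt} is automatically upgraded to the idempotent identity we want.
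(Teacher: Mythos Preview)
Your argument is correct and follows essentially the same route as the paper's proof: from $A\cap B=0$ (Lemma~\ref{spelemnt}) one deduces that the two ideals mutually annihilate, and then Lemma~\ref{annid} converts this into $\epsilon_a\in(1-\epsilon_b)\C$, whence $\epsilon_a\epsilon_b=0$. The only cosmetic difference is that the paper phrases the middle step as $a\in\mathrm{Ann}_J(B)$ and then invokes Lemma~\ref{epsU} to obtain $\epsilon_aB=0$, whereas you go via $B\subseteq\mathrm{Ann}_Q(A)=(1-\epsilon_a)Q$ directly from Lemma~\ref{annid}; these are interchangeable.
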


\begin{proof} Let $B$ be the ideal of $J$ generated by $b$. By \ref{spelemnt}, $a\in {\rm Ann}_J(B)$,
hence $\epsilon_aB = 0$ by \ref{epsU}, hence $\epsilon_a \in {\rm Ann}_\C(b) =
(1-\epsilon_b)\C$ by \ref{annid}. Then $\epsilon_a \epsilon_b \in (1-\epsilon_b)\C \epsilon_b=(\epsilon_b-\epsilon^2_{b})\C=0$, and therefore $\epsilon_a\epsilon_b= 0$.\end{proof}

\begin{lema}\label{spstalk}Let $J$, $\C$, $B= B(\C)$ and $\frak m$ be as before, then $J_{\frak m}$ is
a strongly prime Jordan algebra.\end{lema}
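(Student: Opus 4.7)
The plan is to verify, for $J_{\mathfrak m}=J/\mathfrak mJ$, the elemental characterization of strong primeness that was invoked in the proof of Lemma~\ref{spelemnt}: I will show that whenever $a,b\in J$ satisfy $U_aU_JU_bJ\subseteq\mathfrak mJ$, either $a\in\mathfrak mJ$ or $b\in\mathfrak mJ$. Since $\mathfrak m$ is an ideal of the Boolean algebra $B$ and thus closed under finite joins, an element $c\in J$ lies in $\mathfrak mJ$ exactly when $\epsilon_c\in\mathfrak m$, so the task reduces to proving: if $\epsilon_{U_aU_xU_by}\in\mathfrak m$ for every $x,y\in J$, then $\epsilon_a\in\mathfrak m$ or $\epsilon_b\in\mathfrak m$. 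I will argue by contradiction, assuming $\epsilon_a,\epsilon_b\notin\mathfrak m$, so that $e:=\epsilon_a\epsilon_b\notin\mathfrak m$ by primality of the maximal ideal $\mathfrak m$ in the Boolean algebra $B$.

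The central technical step will be the identity $\bigvee_{x,y\in J}\epsilon_{U_aU_xU_by}=e$ inside the complete Boolean algebra $B$. The upper bound $\le e$ is immediate: centroid-linearity of $U$ together with $\epsilon_a a=a$ and $\epsilon_b b=b$ yields $\epsilon_a U_aU_xU_by=U_aU_xU_by=\epsilon_b U_aU_xU_by$, so each $\epsilon_{U_aU_xU_by}\le\epsilon_a\epsilon_b=e$. For the reverse inequality, letting $\delta$ denote the displayed join, $(1-\delta)U_aU_xU_by=0$ for all $x,y$, i.e.\ $U_{(1-\delta)a}U_JU_bJ=0$, whence Lemma~\ref{elemidemp} forces $(1-\delta)\epsilon_a\epsilon_b=0$, that is $e\le\delta$.

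Next, Zorn's Lemma in $B$ will yield a maximal orthogonal family $\{\delta_\alpha\}_{\alpha\in A}$ of nonzero idempotents such that each $\delta_\alpha\le\epsilon_{U_aU_{x_\alpha}U_by_\alpha}$ for suitable choices $x_\alpha,y_\alpha\in J$; maximality combined with the above identity forces $\bigvee_\alpha\delta_\alpha=e$, and adjoining the orthogonal complement $1-e$ produces a dense orthogonal subset of $B$. Using the orthogonal completeness of $J=Q_{max}(J)$ from Lemma~\ref{ocquot}, I will form the elements $\hat x=\sum^{\perp}_\alpha\delta_\alpha x_\alpha$ and $\hat y=\sum^{\perp}_\alpha\delta_\alpha y_\alpha$ of $J$. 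Quadraticity of $U$ together with $\delta_\alpha\delta_\beta=0$ for $\alpha\ne\beta$ collapses all cross terms, producing
\[U_aU_{\hat x}U_b\hat y=\sum^{\perp}_\alpha\delta_\alpha\,U_aU_{x_\alpha}U_by_\alpha,\]
whose associated idempotent is $\bigvee_\alpha\delta_\alpha\epsilon_{U_aU_{x_\alpha}U_by_\alpha}=\bigvee_\alpha\delta_\alpha=e$. Thus the hypothesis gives $e=\epsilon_{U_aU_{\hat x}U_b\hat y}\in\mathfrak m$, contradicting $e\notin\mathfrak m$.

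I expect the main delicate point to be the orthogonal-sum bookkeeping: verifying that $U$ distributes over $\sum^{\perp}$ in the claimed way, so that only the diagonal $\alpha=\beta$ terms survive, and that the idempotent attached to an orthogonal sum is the join of its termwise weighted idempotents. Both should follow from the general machinery of \cite{bmami} on orthogonal completions translated to the quadratic Jordan setting, together with the identity $U_{\epsilon a}=\epsilon^2 U_a=\epsilon U_a$ for idempotents $\epsilon$ in the centroid.
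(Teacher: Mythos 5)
Your argument is correct, and it runs on the same engine as the paper's: Lemma~\ref{elemidemp}, orthogonal completeness of $Q_{max}(J)$, and the Boolean calculus of the maximal ideal $\frak m\subseteq B$. The presentation, however, is considerably more elaborate than what the paper does, and it is worth seeing where the extra work comes from. You prove, by contradiction, the identity $\bigvee_{x,y}\epsilon_{U_aU_xU_by}=\epsilon_a\epsilon_b$, and then build a single witness element $U_aU_{\hat x}U_b\hat y$ with idempotent $e=\epsilon_a\epsilon_b$ via Zorn's Lemma and an explicit orthogonal sum. The paper instead observes directly that the subset $X=U_aU_JU_bJ$ is orthogonally closed (the computation you carry out for $U_aU_{\hat x}U_b\hat y$ is exactly the verification of this closedness, applied to an arbitrary family of elements of $X$), and then invokes the standard principle stated just before the lemma --- for an orthogonally complete $X\ni 0$, $\phi_{\frak m}(X)=0$ if and only if $\epsilon_X\in\frak m$ --- to get $\epsilon_X\in\frak m$ in one stroke. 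Setting $\delta=1-\epsilon_X\notin\frak m$, one has $\delta X=0$, so $U_{\delta a}U_JU_bJ=0$, and a single application of Lemma~\ref{elemidemp} yields $\delta\epsilon_a\epsilon_b=0\in\frak m$; primality of $\frak m$ finishes. In short: what you establish by Zorn plus the explicit $\sum^{\perp}$ bookkeeping, the paper absorbs into the remark ``$X$ is orthogonally closed,'' after which the argument collapses to three lines. Your version is self-contained and proves a slightly sharper fact (the exact value of $\epsilon_X$), which may be worth keeping; but if you want to match the paper's economy, replace your steps 4--8 with a direct proof that $X$ is orthogonally closed and then apply the $\phi_{\frak m}$-principle, after which Lemma~\ref{elemidemp} is only needed once and no reductio is necessary.
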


\begin{proof} Suppose that $U_{\phi_{\frak m}(a)}U_{J_{\frak m}}U_{\phi_{\frak m}(b)}J_{\frak m}= 0$,
then $X = U_aU_JU_bJ \subseteq \frak m$. It is easy to see that $X$ is orthogonally closed,
hence $\epsilon_X \in \frak m$. Set $\delta = 1-\epsilon_X$. Then $U_{\delta a}U_JU_bJ =
\delta U_aU_JU_bJ = 0$, hence by \ref{elemidemp}, $0 = \epsilon_{\delta a}\epsilon_b=
\delta\epsilon_a\epsilon_b$. Thus $\delta\epsilon_a\epsilon_b \in \frak m$, and since $\delta
\not\in \frak m$, we get $\epsilon_a\in \frak m$ or $\epsilon_b\in \frak m$, hence $\phi_{\frak
m}(a) = 0$ or $\phi_{\frak m}(b) = 0$. Thus $J_{\frak m}$ is strongly prime by the elemental
characterization of strong primeness [ACNL].\end{proof}

\begin{lema}\label{centrstalk}Let $J$, $\C$, $B= B(\C)$ and $\frak m$ be as before. Then $\phi_{\frak
m}(C_w(J))= C_w(J_{\frak m})$.\end{lema}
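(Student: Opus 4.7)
The plan is to prove the two inclusions separately. The inclusion $\phi_{\frak m}(C_w(J))\subseteq C_w(J_{\frak m})$ is immediate: weak centrality of $z$ is defined by the vanishing of a family of $\C$-linear polynomial identities (typically of the form $[V_z,V_a]b=0$ together with the analogous commutation conditions involving $U_z$), and since $\phi_{\frak m}:J\to J_{\frak m}$ is a Jordan algebra homomorphism these identities transfer verbatim to $\phi_{\frak m}(z)$.

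For the reverse inclusion I would lift and correct. Given $\bar z\in C_w(J_{\frak m})$, pick any $z_0\in J$ with $\phi_{\frak m}(z_0)=\bar z$. For every $a,b\in J$ and every weak-centrality obstruction polynomial $T$, the element $T(z_0,a,b)\in J$ has zero image in $J_{\frak m}$, hence lies in ${\rm Ker}\,\phi_{\frak m}=\{x\in J\mid\epsilon_x\in\frak m\}$. Let $W$ be the $\C$-submodule of $J$ generated by all such $T(z_0,a,b)$ and form its orthogonal completion $O(W)=W_{\E(\C)}$ as in Lemma \ref{oocc}. Any $u\in O(W)$ has a representative $(f,\frak a)$ with $f(\frak a)\subseteq W$, so $\alpha u=f(\alpha)\in{\rm Ker}\,\phi_{\frak m}$ for every $\alpha\in\frak a$. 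Using that $J_{\frak m}$ is strongly prime (Lemma \ref{spstalk}) so that its centroid acts faithfully, together with the essentiality of $\frak a$, one deduces $\phi_{\frak m}(u)=0$. Thus $O(W)$ is an orthogonally complete subset of $J$ contained in ${\rm Ker}\,\phi_{\frak m}$, and the characterization of the kernel recalled before Lemma \ref{spelemnt} yields $\epsilon_{O(W)}\in\frak m$.

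Setting $\delta=1-\epsilon_{O(W)}\in B\setminus\frak m$, we then have $\delta\cdot W=0$; since $\delta\in\Gamma(J)$ commutes with every $U$ and $V$ operator, this reads $T(\delta z_0,a,b)=\delta\,T(z_0,a,b)=0$ for all admissible $a,b,T$, so $\delta z_0\in C_w(J)$. Moreover $(1-\delta)z_0=\epsilon_{O(W)}z_0\in\frak mJ$, whence $\phi_{\frak m}(\delta z_0)=\phi_{\frak m}(z_0)=\bar z$, finishing the argument.

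The main technical obstacle is the verification in the middle paragraph that $O(W)\subseteq{\rm Ker}\,\phi_{\frak m}$. A naive attempt to write $\epsilon_{O(W)}=\bigvee_{w\in W}\epsilon_w$ and use $\epsilon_w\in\frak m$ fails because $\frak m$ need not be closed under arbitrary joins; this is precisely where the strong primeness of the stalk $J_{\frak m}$ (and the resulting domain structure on $\Gamma(J_{\frak m})$, which makes the centroid action nondegenerate) is essential, paralleling the use of those properties in the proofs of Lemmas \ref{spstalk} and \ref{elemidemp}.
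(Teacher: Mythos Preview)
Your overall strategy---lift $\bar z$ to $z_0$, collect the weak-centrality obstructions, and correct $z_0$ by an idempotent $\delta\notin\frak m$ that kills them---is exactly the shape of the argument the paper invokes from \cite[3.2.13]{bmami}. The easy inclusion and the final paragraph are fine. The gap is in the middle, precisely at the point you yourself flag as ``the main technical obstacle'': your justification that $O(W)\subseteq\ker\phi_{\frak m}$ does not go through.

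Concretely, from $\alpha u\in\ker\phi_{\frak m}$ for all $\alpha$ in an essential ideal $\frak a\trianglelefteq\C$ you want to conclude $\phi_{\frak m}(u)=0$ by appealing to the faithful action of $\Gamma(J_{\frak m})$ on the strongly prime stalk. But the map $\C\to\Gamma(J_{\frak m})$ can annihilate all of $\frak a$: for instance, if $\C=\prod_{i\in I}F_i$ is a product of fields, $\frak m$ corresponds to a non-principal ultrafilter, and $\frak a=\bigoplus_{i\in I}F_i$, then $\frak a$ is essential in $\C$ yet $\frak a\subseteq\frak m\C$, so its image in the stalk is zero and the faithfulness argument is vacuous. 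Equivalently, $\ker\phi_{\frak m}=\frak mJ$ is in general \emph{not} orthogonally complete, so passing from $W\subseteq\ker\phi_{\frak m}$ to $O(W)\subseteq\ker\phi_{\frak m}$ is illegitimate.

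The fix, and what \cite[3.2.13]{bmami} actually exploits, is that one never needs to enlarge $W$: the obstruction sets are \emph{already} orthogonally complete because $J=Q_{max}(J)$ is. For each of the two defining polynomials in the formula
\[
\boPHI=(\forall x)(\forall y)\bigl(\Vert U_xU_zy=U_yU_zx\Vert\bigr)\wedge\bigl(\Vert x\circ(z\circ y)=(x\circ z)\circ y\Vert\bigr),
\]
the set $W_i=\{T_i(z_0,a,b):a,b\in J\}$ is orthogonally complete: if $D\subseteq B$ is dense orthogonal and $a=\sum^\perp_\epsilon\epsilon a_\epsilon$, $b=\sum^\perp_\epsilon\epsilon b_\epsilon$, then $\epsilon T_i(z_0,a,b)=\epsilon T_i(z_0,a_\epsilon,b_\epsilon)$ for each $\epsilon\in D$, so $T_i(z_0,a,b)=\sum^\perp_\epsilon\epsilon T_i(z_0,a_\epsilon,b_\epsilon)$. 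Since $0\in W_i\subseteq\ker\phi_{\frak m}$ and $W_i$ is orthogonally complete, the characterization recalled before Lemma \ref{spelemnt} gives $\epsilon_{W_i}\in\frak m$ directly. Taking $\delta=(1-\epsilon_{W_1})(1-\epsilon_{W_2})\notin\frak m$ then yields $\delta z_0\in C_w(J)$ with $\phi_{\frak m}(\delta z_0)=\bar z$, exactly as in your last paragraph.
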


\begin{proof} This is essentially the same proof as the one of \cite[3.2.13]{bmami} using the formula ${\bm \Phi} = (\forall x)(\forall y)(\Vert U_xU_zy = U_yU_zx\Vert)\land(\Vert x\circ (z\circ
y) = ((x\circ z)\circ y\Vert)$.\end{proof}

\begin{lema}\label{simplestalks}Let $J$, $\C$, $B= B(\C)$ and $\frak m$ be as before, then $J_{\frak
m}$ is a simple Jordan algebra of finite capacity.\end{lema}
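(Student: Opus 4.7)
The plan is to combine the strong primeness of $J_{\frak m}$ (Lemma~\ref{spstalk}) with the PI hypothesis and the self-quotient character $J=Q_{max}(J)$ in order to exhibit an invertible weak-central element in $J_{\frak m}$; together with Zelmanov's structure theorem for strongly prime PI Jordan algebras this will force $J_{\frak m}$ to be simple of finite capacity.

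First I would observe that $J_{\frak m}$ is a PI-algebra, since $J$ is PI and polynomial identities descend to any homomorphic image, in particular to the Peirce stalk $J_{\frak m}=J/\frak mJ$. Together with Lemma~\ref{spstalk} this yields that $J_{\frak m}$ is strongly prime PI, and by Fulgham's Jordan analogue of the Posner-Rowen theorem \cite{fulgham} the weak center $C_w(J_{\frak m})$ is therefore nonzero.

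The crux is to promote a nonzero $\bar c\in C_w(J_{\frak m})$ to an invertible element of $J_{\frak m}$. By Lemma~\ref{centrstalk} one lifts $\bar c$ to some $c\in C_w(J)$, and weak centrality forces $U_c$ to act on $J$ as multiplication by a scalar $\gamma_c\in \Gamma(J)=\C(J)$ (using Lemma~\ref{centr}). Since $\C(J)$ is the extended centroid of a nondegenerate algebra it is von Neumann regular and self-injective, and because $\frak m$ is maximal in the boolean algebra of idempotents, the residue $\C(J)/\frak m\C(J)$ is a field, which acts naturally on $J_{\frak m}$. Projecting the identity $U_cc=\gamma_cc=c^3$ down to the stalk gives $U_{\bar c}\bar c=\bar\gamma_c\bar c=\bar c^3$, and since $J_{\frak m}$ is strongly prime and $\bar c\neq 0$ we have $\bar c^3\neq 0$, forcing $\bar\gamma_c\neq 0$ in the residue field, hence invertible there. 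Consequently $U_{\bar c}$ is a bijection on $J_{\frak m}$, so $\bar c$ is invertible in $J_{\frak m}$, which in particular becomes unital.

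To close, a unital strongly prime PI Jordan algebra containing an invertible weak-central element must coincide with its own classical central closure, and Zelmanov's structure theorem then identifies $J_{\frak m}$ with a simple Jordan algebra of finite capacity. I expect the main obstacle to be verifying that $U_{\bar c}$ genuinely acts as multiplication by $\bar\gamma_c$ on all of $J_{\frak m}$ (not merely on $\bar c$ itself), which requires checking carefully that the centroid action of $J$ descends cleanly to the stalk via the machinery of Section~6; once that is in place the remainder is a direct application of the classical structure theory for strongly prime PI Jordan algebras.
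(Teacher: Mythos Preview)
Your outline follows the paper's strategy closely up to the point where you establish that $U_{\bar c}$ is invertible on $J_{\frak m}$: both arguments lift a nonzero weak-central element via Lemma~\ref{centrstalk}, use that $U_c\in\C$ with $\C$ von Neumann regular, and push down to the stalk. The paper also reduces ``simple of finite capacity'' to the single assertion that $\Gamma(J_{\frak m})$ is a field, which is the standard criterion for a strongly prime PI algebra.

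The genuine gap is in your closing step. The assertion ``a unital strongly prime PI Jordan algebra containing an invertible weak-central element must coincide with its own classical central closure'' is false: take $J_{\frak m}=M_n(R)^+$ for $R$ a commutative domain that is not a field; the identity is an invertible weak-central element, yet the algebra is not simple and its centroid $R$ is not a field. Knowing that one centroid element $U_{\bar c}$ is invertible, or that the image of $\C/\frak m\C$ in $\Gamma(J_{\frak m})$ is a field, does not by itself force all of $\Gamma(J_{\frak m})$ to be a field, because you have not shown that $\C\to\Gamma(J_{\frak m})$ is surjective.

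The paper closes this gap directly: given any $0\neq\delta\in\Gamma(J_{\frak m})$, the element $\delta\bar z$ is again a nonzero element of $C_w(J_{\frak m})$ (weak centrality is stable under the centroid, and $J_{\frak m}$ is $\Gamma(J_{\frak m})$-torsion-free by strong primeness). Re-running the same lifting and von Neumann regularity argument for $\delta\bar z$ makes $U_{\delta\bar z}=\delta^2U_{\bar z}$ invertible in $\Gamma(J_{\frak m})$; since $U_{\bar z}$ is already invertible, $\delta^2$ and hence $\delta$ is invertible. Thus $\Gamma(J_{\frak m})$ is a field, and now the structure theorem applies. Your argument can be repaired by inserting exactly this step in place of the unsupported central-closure claim.
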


\begin{proof} Since $J_{\frak m}$ is strongly prime by \ref{spstalk}, and it is PI, it suffices to
show that $\Gamma(J_{\frak m})$ is a field. To prove it, note that $C_w(J_{\frak m}) \ne
0$, and using \ref{centrstalk}, take
$z\in C_w(J)$ with $0 \ne \bar z = \phi_{\frak m}(z) \in C_w(J_{\frak m})$. Since $U_z\in \C$
and $\C$ is von Neumann regular, there exists $\gamma \in \C$ with $U_z^2\gamma=U_z\gamma U_z= U_z$. Then
$\gamma$ induces an element $\bar \gamma \in \Gamma(J_{\frak m})$ given by ${\bar
\gamma}(\phi_{\frak m}(x)) = \phi_{\frak m}(\gamma x)$ (since $\gamma\frak mJ\subseteq \frak
mJ$), and we have $U_{\bar z}^2\bar\gamma = U_{\bar z}$. Since $J_{\frak m}$ does not have
$\Gamma( J_{\frak m})$ torsion, we get $U_{\bar z}\bar \gamma = 1$, and $U_{\bar z}$ is
invertible. then for any $0 \ne \delta \in \Gamma(J_{\frak m})$, we have $\delta (\delta
U_{\delta
\bar z}^{-1}U_{\bar z}) = 1$, hence $\delta$ is invertible and $\Gamma(J_{\frak m})$ is a
field.\end{proof}

\begin{teorema}Any nondegenerate PI Jordan algebra is strongly nonsingular.\end{teorema}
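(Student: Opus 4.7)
The plan is to reduce strong nonsingularity of $J$ to verification in each Peirce stalk $J_{\frak m}$ of $Q=Q_{\max}(J)$. By Lemma~\ref{simplestalks} each such $J_{\frak m}$ is a simple Jordan algebra of finite capacity, in which strong nonsingularity is elementary: using the Peirce decomposition at the unit, any proper inner ideal fails to meet some minimal inner ideal $U_{\bar e}J_{\frak m}$ for a primitive idempotent $\bar e$, so the only essential inner ideal of $J_{\frak m}$ is $J_{\frak m}$ itself; consequently, if $U_{\bar a}$ annihilates an essential inner ideal, then $U_{\bar a}J_{\frak m}=0$, and $\bar a=0$ by nondegeneracy.

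Assume for contradiction that $K\subseteq J$ is an essential inner ideal and $a\in J$ satisfies $U_aK=0$ with $a\neq 0$. Viewing $a\in J\subseteq Q$, since the conditions $a=0$, $\epsilon_a=0$, and $\phi_{\frak m}(a)=0$ for every maximal ideal $\frak m$ of $B=B(\C)$ are equivalent (by orthogonal completeness, Lemma~\ref{ocquot}, and the fact that the intersection of the maximal ideals of $B$ is zero), there must exist some $\frak m$ with $\bar a:=\phi_{\frak m}(a)\neq 0$, and projecting $U_aK=0$ yields $U_{\bar a}\phi_{\frak m}(K)=0$ in $J_{\frak m}$.

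The main obstacle, and the technical heart of the argument, is to verify that $\phi_{\frak m}(K)=J_{\frak m}$. For this, given any primitive idempotent $\bar e$ of $J_{\frak m}$, Fulgham's theorem applied in the PI nondegenerate algebra $J_{\frak m}$ produces $0\neq \bar z\in U_{\bar e}J_{\frak m}\cap C_w(J_{\frak m})$; by Lemma~\ref{centrstalk} we lift $\bar z=\phi_{\frak m}(z)$ with $z\in C_w(J)$ and $\epsilon_z\notin\frak m$. The principal inner ideal $[z]=\Phi z+\Phi z^2+U_z\widehat{J}$ is nonzero in $J$, so by essentiality of $K$, $K\cap[z]\neq 0$. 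The final step uses Fulgham iteratively within $K\cap[z]$ together with the orthogonal completeness of $Q$ over $\C$ and the boolean structure of $B$ to select an element $k\in K\cap[z]$ whose support $\epsilon_k$ lies outside $\frak m$, yielding a nonzero $\phi_{\frak m}(k)\in\phi_{\frak m}(K)\cap U_{\bar e}J_{\frak m}$. Since $\phi_{\frak m}(K)$ then meets every minimal inner ideal of $J_{\frak m}$, by finite capacity it contains the unit and hence equals $J_{\frak m}$.

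Thus $U_{\bar a}J_{\frak m}=0$, forcing $\bar a=0$ by nondegeneracy and contradicting $\phi_{\frak m}(a)\neq 0$. Therefore $a=0$, and $J$ is strongly nonsingular. The most delicate point is the final support-selection step, where the centroid-theoretic machinery developed in the preceding sections, especially Lemmas~\ref{centr}, \ref{ocquot} and \ref{centrstalk}, is decisively used to guarantee that the element chosen from $K\cap[z]$ remains nonzero after projection to the stalk.
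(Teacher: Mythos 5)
Your high-level strategy --- reduce strong nonsingularity to the Peirce stalks $Q_{\frak m}$, which are simple of finite capacity by Lemma~\ref{simplestalks} and hence have no proper essential inner ideals --- matches the paper's. The gap is in the reduction step itself, and it is not a minor technicality.

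You try to prove directly that $\phi_{\frak m}(K)=J_{\frak m}$ by showing that $\phi_{\frak m}(K)$ meets every minimal inner ideal, and the ``support-selection'' step that you flag as the delicate point is in fact where the argument breaks. From $K\cap[z]\ne 0$ in $J$ you get no control whatsoever over the support idempotents $\epsilon_k$ of the elements $k\in K\cap[z]$. The essentiality of $K$ is a statement about inner ideals of $J$; it does not push down to the stalks. In general the image $\phi_{\frak m}(K)$ of an essential inner ideal $K\subseteq J$ in a Peirce stalk need not be essential, nor even nonzero. For instance, if $Q=\prod_n J_n$ with each $J_n$ simple of finite capacity over a common scalar ring, the inner ideal $K=\bigoplus_n J_n$ of finitely supported elements is essential, but for any nonprincipal maximal ideal $\frak m$ of $B(\C)$ one has $\epsilon_k\in\frak m$ for every $k\in K$, so $\phi_{\frak m}(K)=0$; taking $z=1$, every element of $K\cap[z]=K$ also has support inside $\frak m$, and the element you wish to select simply does not exist. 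So the claim $\phi_{\frak m}(K)=J_{\frak m}$ is false as stated, and no amount of iterating Fulgham's theorem inside $K\cap[z]$ will repair it, because the obstruction is the lack of orthogonal completeness of $K$, not a lack of central elements.

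This is exactly the obstruction the paper's proof is designed to circumvent, and the missing ingredient in your argument is the orthogonal completion. One first replaces $K$ by $O(\C K)\subseteq Q$, which by Lemma~\ref{oocc} is an inner ideal of $Q$, is still annihilated by $U_a$ (since $U_a\C K=0$ and orthogonal completion is compatible with this), contains $K$, and is essential in $Q$. Because $O(\C K)$ is orthogonally complete, essentiality can be written as the hereditary Horn formula
$$\boPHI = (\forall x)(\exists y)\bigl(\Vert x = 0\Vert \lor \Vert U_xy \ne 0\Vert\bigr)\land\bigl(\Vert U_xy \in O(\C K)\Vert\bigr),$$
which $Q$ satisfies, so by \cite[3.2.11]{bmami} every stalk $Q_{\frak m}$ satisfies it as well: $\phi_{\frak m}(O(\C K))$ is an essential inner ideal of $Q_{\frak m}$, hence equals $Q_{\frak m}$ by Lemma~\ref{simplestalks}. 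It is this set, not $\phi_{\frak m}(K)$, that fills the stalk. Then $U_{\phi_{\frak m}(a)}\phi_{\frak m}(O(\C K))=U_{\phi_{\frak m}(a)}Q_{\frak m}=0$ forces $\phi_{\frak m}(a)=0$ for every $\frak m$, and hence $a=0$. Your appeals to Lemmas~\ref{centr}, \ref{ocquot}, \ref{centrstalk} and Fulgham's theorem are not where the work lies; the indispensable steps are passing to $O(\C K)$ and invoking the Horn-formula transfer to stalks, and both are absent from your argument.
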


\begin{proof} Let $K$ be an essential ideal of the nondegenerate PI Jordan algebra $J$, and
suppose that $U_aK = 0$ for some $a\in J$. Take $Q= Q_{max}(J)$ and denote $\C = \Gamma(Q)$.
Then the $\C$-linear span $\C J$ of $J$ in $Q$ is an algebra of quotients of $J$ by the
transitivity of algebras of quotients, and therefore any nonzero inner ideal of $\C J$ hits
$J$. This implies that the inner ideal $\C K$ of $\C J$ generated by $K$ over $\C$ is
essential, and it is clear that $U_a\C K = 0$. Now consider the orthogonal complection
$O(\C K)$, which by \ref{oocc} is easily seen to be an inner ideal of $Q$, and has $U_aO(\C
K)= 0$. Now, since $Q$ is an algebra of quotients of $J$, any nonzero inner ideal of $Q$
hits $J$ in a nonzero inner ideal, hence it hits $K$, and therefore it also hits $O(\C K)
\supseteq K$. Therefore $O(\C K)$ is essential. Now consider the sentence
$${\boldsymbol \Phi} = (\forall x)(\exists y)(\Vert x = 0\Vert \lor \Vert U_xy \ne
0\Vert)\land(\Vert U_xy \in O(\C K)\Vert).$$

It is easy to see that ${\boPHI}$ and $\neg{\boPHI}$ are Horn formulas, and  ${\bm
\Phi}$ is hereditary since $Q\models {\boPHI}$. Thus for any maximal ideal $\frak m$ of
the boolean algebra $B = B(\Gamma(Q))$, we obtain $Q_{\frak m}\models {\boPHI}$ by
\cite[3.2.11]{bmami}, hence $\phi_{\frak m}(O(\C K))$ is an essential inner ideal of $Q_{\frak m}$.
Therefore $\phi_{\frak m}(O(\C K)) = Q_{\frak m}$ by \ref{simplestalks}. Now $U_{\phi_{\frak
m}(a)}\phi_{\frak m}(O(\C K))= 0$ implies $\phi_{\frak m}(a) = 0$. Since this holds for all
$\frak m$, we get $a = 0$.\end{proof}

\begin{lema}\label{VNR}Let $J$ be a nondegenerate Jordan PI-algebra. Then $Q_{max}(J)$ is von
Neumann regular.\end{lema}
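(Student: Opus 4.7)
The plan is to prove that every element of $Q = Q_{max}(J)$ is von Neumann regular by reducing to the Peirce stalks $Q_{\frak m}$ constructed in Section 7 and then gluing local solutions using the orthogonal completeness of $Q$ established in Lemma \ref{ocquot}. The strategy mirrors that of the preceding theorem on strong nonsingularity, but applied in the opposite direction: information is transferred from the stalks back to $Q$, rather than from $Q$ down to the stalks.

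First I would check that each Peirce stalk $Q_{\frak m}$ is von Neumann regular. By Lemma \ref{simplestalks}, $Q_{\frak m}$ is a simple Jordan algebra of finite capacity. By the classification of such algebras (they are semisimple Artinian in the Jordan sense, and their inner ideals split off principal idempotent inner ideals), every element $\bar q\in Q_{\frak m}$ admits $\bar p \in Q_{\frak m}$ with $U_{\bar q}\bar p = \bar q$, so that the Horn sentence
$$\mathbf{\Psi} \;=\; (\forall x)(\exists y)(\Vert U_x y = x\Vert)$$
holds in every $Q_{\frak m}$. Next I would invoke the orthogonal completeness transfer in the direction that lifts Horn sentences from all stalks to the whole algebra. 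More concretely, given $q\in Q$, pick for each maximal ideal $\frak m$ of $B = B(\C)$ a local witness $\bar p_{\frak m}\in Q_{\frak m}$ with $U_{\phi_{\frak m}(q)}\bar p_{\frak m} = \phi_{\frak m}(q)$. The orthogonal completeness of $Q$ over $\C$, combined with Lemma \ref{oocc}, allows the family $\{\bar p_{\frak m}\}$ to be patched into a single element $p\in Q$ satisfying $\phi_{\frak m}(p) = \bar p_{\frak m}$ for every $\frak m$. Since the canonical map $Q\to \prod_{\frak m} Q_{\frak m}$ is injective on the nondegenerate algebra $Q$ (its kernel is orthogonally closed and annihilates a dense subset of $B$, hence is trivial), the equality $\phi_{\frak m}(U_q p - q) = 0$ for every $\frak m$ forces $U_q p = q$.

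The hard part will be the gluing step in the previous paragraph: because $\mathbf{\Psi}$ has the $\forall\exists$ shape, the witnesses chosen independently in each stalk need not a priori be the images of a common element of $Q$. The rigorous justification relies on the orthogonal completeness framework of \cite{bmami}, specifically on the companion of \cite[3.2.11]{bmami} that lifts Horn sentences from all Peirce stalks of a nondegenerate orthogonally complete algebra back to the whole, which ultimately rests on the concrete description of orthogonal closures in Lemma \ref{oocc} as sums $\sum^{\perp}_{\epsilon\in D}\epsilon x_\epsilon$ along dense orthogonal subsets $D\subseteq B$. Once that lifting is granted, the regularity of $Q_{max}(J)$ is immediate, and in fact the argument shows that the witness $p$ can be chosen in $Q$ itself rather than in any larger extension.
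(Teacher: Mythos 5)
Your overall strategy --- reduce to the Peirce stalks via Lemma \ref{simplestalks} and then transfer back to $Q = Q_{max}(J)$ using the orthogonal completeness established in Lemma \ref{ocquot} --- is the same as the paper's. But the gluing step you describe is wrong as stated, and you flag it yourself as ``the hard part'' without actually resolving it. You choose one witness $\bar p_{\frak m} \in Q_{\frak m}$ independently for each maximal ideal $\frak m$ of $B = B(\C)$ and claim there is a single $p \in Q$ with $\phi_{\frak m}(p) = \bar p_{\frak m}$ for \emph{every} $\frak m$. That cannot be right: the canonical map $Q \to \prod_{\frak m} Q_{\frak m}$ is injective but far from surjective, and orthogonal completeness in the form of Lemma \ref{oocc} only patches families indexed by a dense \emph{orthogonal} subset $D \subseteq B$, not by the whole maximal spectrum of $B$. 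An arbitrary transversal of the stalks has no reason to lift, so invoking an unspecified ``companion'' of \cite[3.2.11]{bmami} does not close the gap.

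What makes the argument work --- and what the paper actually does --- is a compactness reduction to a \emph{finite} orthogonal partition, which sidesteps the infinite gluing entirely. For $a \in Q$ and each maximal ideal $\frak m$, local regularity in the simple finite-capacity stalk $Q_{\frak m}$ gives $y_{\frak m}$ with $\phi_{\frak m}(U_a y_{\frak m}) = \phi_{\frak m}(a)$, so $b_{\frak m} = U_a y_{\frak m} - a$ has $\epsilon_{b_{\frak m}} \in \frak m$ and the idempotent $\delta_{\frak m} = 1 - \epsilon_{b_{\frak m}}$ lies outside $\frak m$ and kills $U_a y_{\frak m} - a$. The open sets $\{\frak m : \delta_{\frak m} \notin \frak m\}$ cover the Stone space of $B$, and by \cite[3.1.2]{bmami} one extracts finitely many $\frak m_1, \dots, \frak m_n$ and pairwise orthogonal idempotents $\epsilon_i \le \delta_{\frak m_i}$ with $\epsilon_1 + \cdots + \epsilon_n = 1$. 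Then the \emph{finite} sum $y = \sum_i \epsilon_i y_{\frak m_i}$ satisfies $U_a y - a = \sum_i \epsilon_i (U_a y_{\frak m_i} - a) = 0$ by $\C$-linearity of $U_a$ and the relation $\epsilon_i \delta_{\frak m_i} = \epsilon_i$. No Horn-sentence lifting meta-theorem is needed, and no infinite family of witnesses ever has to be reconciled; you should replace your gluing paragraph with this compactness argument.
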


\begin{proof}Denote $Q = Q_{max}(J)$ and take $a \in Q$. For every maximal ideal $\frak m$ of
$B(\Gamma(J))$, the Peirce stalk $Q_{\frak m}$ is simple of finite capacity by
\ref{simplestalks}, hence there exists $y_{\frak m} \in J$ such that $\phi_{\frak
m}(U_ay_{\frak m}) = U_{\phi_{\frak m}(a)}\phi_{\frak m}(y_{\frak m}) = \phi_{\frak m}(a)$, Then
$b_{\frak m} = U_ay_{\frak m}-a \in \frak mJ$, hence $\epsilon_{b_{\frak m}} \in \frak m$. Set
$\delta_{\frak m} = 1-\epsilon_{b_{\frak m}}$, so that $\delta_{\frak m} \not\in \frak m$, and
$\delta_{\frak m}(U_ay_{\frak m} - a) = 0$. By \cite[3.1.2]{bmami}, there exists a finite collection
of idempotents $\epsilon_1, \dots, \epsilon_n$ with $\epsilon_i \le \delta_{{\frak m}_i}$
for some maximal ideal ${\frak m}_i$ of $B(\Gamma(Q))$, with $1 =
\epsilon_1+\dots+\epsilon_n$. Then $y = \epsilon_1y_{{\frak m}_1}+\dots + \epsilon_ny_{{\frak
m}_n}$ satisfies $U_ay = a$, and $a$ is von Neumann regular.\end{proof}


\end{document}